\newtheorem{theorem}{Theorem}
\newtheorem{lemma}[theorem]{Lemma}
\newtheorem{corollary}[theorem]{Corollary}
\newtheorem{proposition}[theorem]{Proposition}
\theoremstyle{definition}
\newtheorem{definition}[theorem]{Definition}
\newtheorem{example}[theorem]{Example}
\newtheorem{remark}[theorem]{Remark}
\def\Z{{\mathbb{Z}}}
\def\N{{\mathbb{N}}}
\def\Q{{\mathbb{Q}}} 
\def\C{{\mathbb{C}}}
\def\K{{\mathbb{K}}}
\def\F{{\mathbb{F}}}
\def\m{{\mathfrak{m}}}
\def\n{{\mathfrak{n}}}
\def\p{{\mathfrak{p}}}
\def\q{{\mathfrak{q}}}
\def\a{{\mathfrak{a}}}
\def\b{{\mathfrak{b}}}
\def\LT{\mathop{\mathrm{LT}}\nolimits}
\def\LC{\mathop{\mathrm{LC}}\nolimits}
\def\LM{\mathop{\mathrm{LM}}\nolimits}
\def\Hom{\mathop{\mathrm{Hom}}\nolimits}
\def\gr{\mathop{\mathrm{gr}}\nolimits}
\def\supp{\mathop{\mathrm{supp}}\nolimits}
\def\Syz{\mathop{\mathrm{Syz}}\nolimits}
\def\notdivide{\setbox1=\hbox{$|$\llap{\hbox{/}\kern-0.75pt}}\mathrel{\box1}}
\providecommand{\abs}[1]{\lvert#1\rvert}
\title[An algorithm for computing Hilbert--Samuel multiplicities]
{An algorithm for computing the Hilbert--Samuel multiplicities and reductions of zero-dimensional ideals of Cohen--Macaulay local rings}
\author{Takafumi Shibuta}
\address{Institute of Mathematics for Industry, 744 Motooka, Nishi-ku. Fukuoka, Japan}
\email{shibuta@imi.kyushu-u.ac.jp}
\author{Shinichi Tajima}
\address{Graduate School of Science and Technology, Niigata University, Niigata 950-2181, Japan}
\email{tajima@emeritus.niigata-u.ac.jp}
\keywords{
Hilbert--Samuel multiplicity; Integral closure; Formal power series ring
}
\begin{document}
\maketitle
\begin{abstract}
In this paper, we present an algorithm for computing the minimal reductions of 
$\mathfrak{m}$-primary ideals of Cohen--Macaulay local rings. 
Using this algorithm, we are able to compute the Hilbert--Samuel multiplicities and 
solve the membership problem for the integral closure of $\mathfrak{m}$-primary ideals. 
\end{abstract}
\section{Introduction}
Let $(R,\m,K)$ be a Noetherian local ring of dimension $d\ge 0$, and 
$J \subset R$ a $\mathfrak{m}$-primary ideal. 
{
Let $G=\gr_J(R):=\bigoplus_{k=0}^\infty J^k/J^{k+1}$ be the associated graded ring of $J$. 
We denote by $H_G(k):=\ell(J^k/J^{k+1})$ the Hilbert function of $G$, and by $\chi_J(k)=\ell(R/J^k)=\sum_{i=0}^k H_G(i)$ the Hilbert--Samuel function of $J$ 
were $\ell(M)$ denotes the length of an $R$-module $M$. 
The Hilbert--Samuel polynomial $P_J(t)\in \Q[t]$ is the polynomial of degree $d-1$ satisfying $P_J(k)=\chi_J(k)$ for $k\gg 0$. 
The \emph{Hilbert--Samuel multiplicity} $e_R(J)$ of $J$ is $d!$ times the leading coefficient of $P_J(t)$.} 
By definition, 
\[
e_R(J)=\lim_{k\to \infty}\frac{d!}{k^d}\ell(R/J^{k}). 
\] 
The Hilbert-Samuel multiplicity is one of the most important invariants associated to a $\mathfrak{m}$-primary ideal of a local ring. 
For example, 
the Hilbert--Samuel multiplicities has deep relation with integral closure of ideals. 
If $R$ is formally equidimensional, $e_R(J_1)=e_R(J_2)$ if and only if $J_2\subset \overline{J}_1$ for $\m$-primary ideals $J_1\subset J_2\subset R$ \cite{Rees}. 

{If $I$ is generated by polynomials, it is well-known that the Hilbert--Samuel multiplicity $e_R(\m)$ of $\m\subset R=K\llbracket x_1,\dots,x_n \rrbracket/I$ 
can be computed using the tangent cone \cite{Mora} (see also \cite{Singular} Proposition 5.5.7).} 
In \cite{MR}, Mora and Rossi gave an algorithm for computing the Hilbert--Samuel multiplicities of primary ideals generated by polynomials. 
If $R$ is the localization $S_\m$ of a polynomial ring $S=K[x_1,\dots,x_n]$ at a maximal ideal $\m\subset S$, 
and $J\subset S$ is a $\m$-primary ideal, $\gr_{JR}(R)$ is isomorphic to the associated graded ring $\gr_J(S)=\bigoplus_{k=0}^\infty J^k/J^{k+1}$ of $J$. 
Note that $\gr_{JR}(R)$ is not isomorphic to $\gr_J(S)$ if $J$ is not $\m$-primary. 
In this case, we can compute an expression of $\gr_J(S)$ as a quotient ring of a polynomial ring module an ideal using the theory of Gr\"obner basis (\cite{MR} Theorem 1). 
Hence the Hilbert polynomial of $\gr_J(S)$ is computable, and thus the Hilbert--Samuel multiplicity $e_R(J)$ is also computable by using Gr\"obner basis. 

The purpose of this paper is to present an algorithm for computing the Hilbert--Samuel multiplicity $e_R(J)$ of $\m$-primary ideal of a local ring $(R,\m,K)$ 
when $R=K\llbracket x \rrbracket/I$ is a quotient ring of a formal power series $K\llbracket x\rrbracket=K\llbracket x_1,\dots,x_n\rrbracket$ and $R$ is Cohen--Macaulay. 
Our algorithm is applicable when $I$ and $J$ are generated by computable power series (Corollary \ref{infinite terms}). 
Here, we call a power series $f=\sum_{\alpha \in \Z_{\ge 0}^n}c_\alpha x^\alpha$ computable if the function $\alpha \mapsto c_\alpha$ is a computable function. 
In this case, algorithms of Buchberger's type (e.g. Gr\"obner basis, Mora's algorithm \cite{Mora}, Lazard's homogenization method \cite{Lazard}) are not applicable for computation. 
Our algorithm also has advantages in the case where $J=J'R$ is generated by a polynomial ideal 
$J' \subset K[x]$ and $J'$ is not $\langle x_1,\dots,x_n\rangle$-primary (Example \ref{ex}). 

{Our algorithm is based on the theory of reduction of ideals and local cohomology modules, 
and uses the algorithm proposed by Tajima--Nakamura--Nabeshima \cite{TNN} for computing local cohomology modules.} 
An ideal $Q\subset J$ is called a \emph{reduction} of $J$ if there exists $r>0$ satisfying $QJ^r=J^{r+1}$. 
It is known that $e(Q)=e(J)$ holds if $Q$ is a reduction of $J$. 
In this paper, we give an algorithm for computing reductions of $\mathfrak{m}$-primary ideals when $R$ is a Cohen-Macaulay ring of dimension $d$. 
Since it is known that $d$ generic linear combinations of a system of generators ideals generate a reduction {(\cite{Matsumura} Theorem 14.14)}, 
one can construct probabilistic algorithm for computing reductions of ideals if the coefficient field is infinite. 
However, this method is probabilistic, and the size of coefficients of the output can be large. 
If the coefficient field is finite, this method can not be applied. 
In our algorithm, we use a generic linear combinations with coefficients being variables. 
Thus our algorithm is deterministic, and applicable when the coefficient field is finite. We also give a way of computing reductions with small coefficients. 
As application of this algorithm, we can compute the Hilbert--Samuel multiplicities, 
and can solve the membership problem for integral closures. 
\section{Preliminaries}
In this paper, $S=K\llbracket x\rrbracket=K\llbracket x_1,\dots,x_n\rrbracket$ denotes a formal series ring in indeterminates $x=(x_1,\dots,x_n)$ over arbitrary field $K$. 
We denote by $\m$ the unique maximal ideal of $S$. 
We write $x^1=x_1\cdots x_n$ and $x^{\alpha+1}=x_1^{\alpha_1+1}\cdots x_n^{\alpha_n+1}$ for $\alpha=(\alpha_1,\dots,\alpha_n)$. 
For a ring $R$ and an $R$-module $M$, we denote by $\ell_R(M)=\ell(M)$ the length of $M$. 
For $B\subset M$, $\langle B \rangle_R=\langle B \rangle$ denotes $R$-submodule of $M$ generated by $B$. 
\subsection{Matlis duality}
Our algorithm is based of Matlis duality theorem. 
Here, we give a brief review of Matlis duality. 
Let $E=E_S$ the injective hull of the residue field $S/\m$ of $S$. 
It is known that $E$ is isomorphic to the top local cohomology group $H_{\m}^{n}(S)$ (\cite{BH} Proposition 3.5.4). 
The local cohomology module $H_{\m}^{n}(S)$ has a relative \v{C}ech cohomology representation 
\[
H_{\m}^{n}(S)\cong {K}[x_1^{-1},\dots,x_n^{-1}]\frac{1}{x_1\cdots x_n} 
\]
with an $S$-module structure defined by 
\[
x^{\alpha}\cdot \frac{1}{x^{\beta+1}}=
\left\{
	\begin{array}{ll}
	 \cfrac{1}{x^{\beta-\alpha+1}}
	& \mbox{ if } \beta-\alpha \in \Z^{n}_{\ge 0}, \\[3mm]
	0
	& \mbox{otherwise}. 
	\end{array}
\right.
\]
and its bilinear extension. 
In this paper, we identify $E$ with the above \v{C}ech cohomology representation of $H_{\m}^{n}(S)$, 
\[
E={K}[x_1^{-1},\dots,x_n^{-1}]\frac{1}{x_1\cdots x_n},
\]
and we call $\cfrac{1}{x^{\alpha+1}}$ a \emph{term} of $E$. 
An $S$-submodule of $E$ generated by terms is called a \emph{term module} in this paper. 
\begin{definition}
For an $S$-module $M$, we write $M^\vee:=\Hom_S(M,E)$. 
The functor $(-)^\vee=\Hom_S(-,E)$ is called the \emph{Matlis duality functor}. 
\end{definition}
Now, we recall the Matlis duality theorem. {See \cite{BH} Proposition 3.2.12, Theorem 3.2.13 for the proof. }
\begin{theorem}[Matlis]\label{Matlis}
Let $M$ be a Noetherian $S$-module, and $N$ an Artinian $S$-module. 
Then the following hold. 
\begin{enumerate}[{(}1{)}]
\item $S^\vee \cong E$, and $E^\vee\cong S$. 
\item $M^\vee$ is Artinian, and $N^\vee$ is Noetherian. 
\item There are natural isomorphism $M^{\vee\vee}\cong M$ and $N^{\vee\vee}\cong N$. 
\item {If $M$ is of finite length, then $\ell(M)=\ell(M^\vee)$. }
\end{enumerate}
\end{theorem}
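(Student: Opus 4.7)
The plan is to establish the four parts in the given order, with (1) and (2) serving as the foundation, (3) following by dévissage from the free case, and (4) deduced from (3) by induction on length.

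For (1), the isomorphism $S^\vee \cong E$ is the standard adjunction $\Hom_S(S,E)\cong E$ sending $\phi$ to $\phi(1)$. For $E^\vee\cong S$ I would use the explicit term basis given above. There is a natural map $\mu: S\to \Hom_S(E,E)$ sending $s$ to multiplication by $s$. To show this is an isomorphism I would first observe that the socle of $E$ is the one-dimensional $K$-space spanned by $\frac{1}{x_1\cdots x_n}$, and that any $\phi\in \Hom_S(E,E)$ is determined by the values $\phi\bigl(\frac{1}{x^{\alpha+1}}\bigr)$, which must be compatible with the multiplication rules $x^\beta\cdot \frac{1}{x^{\alpha+1}}$. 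The compatibility forces these values to be the successive truncations of a single formal power series $s\in S$, so $\phi=\mu(s)$; conversely $\mu$ is clearly injective since $\mu(s)$ determines every coefficient of $s$ through its action on terms.

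For (2), starting from a finite presentation $S^q\to S^p\to M\to 0$, applying $\Hom_S(-,E)$ yields the left exact sequence $0\to M^\vee\to E^p\to E^q$, so $M^\vee$ embeds in the Artinian module $E^p$ (the Artinianness of $E$ is immediate from the term description since every nonzero submodule of $E$ contains the socle). Dually, because $E$ is an injective cogenerator of $S\text{-mod}$, an Artinian module $N$ embeds into some $E^p$, which gives a surjection $S^p\cong (E^p)^\vee \twoheadrightarrow N^\vee$, hence $N^\vee$ is Noetherian.

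For (3), the evaluation map $\eta_M\colon M\to M^{\vee\vee}$, $\eta_M(m)(\phi)=\phi(m)$, is natural in $M$. By (1), $\eta_S$ is an isomorphism; taking finite direct sums, $\eta_{S^p}$ is also an isomorphism. For a general finitely generated $M$ with presentation $S^q\to S^p\to M\to 0$, double-dualization preserves exactness (because $E$ is injective, hence $(-)^\vee$ is exact, and applying it twice preserves exactness), and the Five Lemma applied to the natural transformation $\eta$ gives that $\eta_M$ is an isomorphism. The Artinian case $N^{\vee\vee}\cong N$ is proved dually, using an injective copresentation $0\to N\to E^p\to E^q$ and the fact that $\eta_E$ is an isomorphism by (1).

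For (4), I would argue by induction on $\ell(M)$. Since $M$ has finite length, every composition factor is isomorphic to $S/\m=K$, and $K^\vee=\Hom_S(S/\m,E)=(0:_E\m)$ equals the socle $K\cdot \frac{1}{x_1\cdots x_n}$, which is also isomorphic to $K$; in particular $\ell(K^\vee)=1$. The exactness of $(-)^\vee$ then gives $\ell(M^\vee)=\ell(M)$ for every short exact sequence, completing the induction. The main obstacle I anticipate is (1): constructing the inverse of $\mu\colon S\to E^\vee$ in a clean, coordinate-free enough way that the naturality needed in (3) goes through without circularity.
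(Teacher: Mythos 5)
The paper offers no proof of this theorem---it simply cites Bruns--Herzog, Proposition 3.2.12 and Theorem 3.2.13---and your sketch is essentially the standard argument found there: $\Hom_S(E,E)\cong S$ via completeness of the power series ring, dualizing finite presentations/copresentations, the Five Lemma for the biduality map, and induction on length using $K^\vee\cong K$. The one soft spot is your parenthetical claim that $E$ is Artinian ``immediately'' because every nonzero submodule contains the socle; an essential simple socle alone does not give the descending chain condition, and the clean justification is to use $E^\vee\cong S$ from part (1) (which you do prove first), so that a strictly descending chain of submodules $N\subset E$ yields a strictly ascending chain of submodules $(E/N)^\vee\subset S$, contradicting Noetherianity of $S$.
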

For an ideal $J\subset S$, we can identify $(S/J)^\vee$ with the submodule $\{\eta \in E\mid J\eta =0\}$ of $E$. 
By Theorem~\ref{Matlis}, for $f\in S$, $f\in J$ if and only if $f\eta=0$ for all $\eta \in (S/J)^\vee$. 
\subsection{Integral closure}
Here, we recall some basic facts on the integral closure of ideals. See \cite{BH} Section 4, and \cite{HS} for details. 
Let $(R, \n, K)$ be a local ring of dimension $d$, and $J \subset R$ a $\n$-primary ideal. 
\begin{definition}
We say that $x\in R$ is \emph{integral } over $J$ if there exists $m\in \N$ and $c_i \in J^i$ such that 
\[
x^m+c_1x^{m-1}+\cdots+c_m=0. 
\] 
\end{definition}
\begin{theorem}\label{int cl}
Let $\mathcal{O}_{\mathbb{C}^d,O}$ be the ring of analytic germs at the origin $O\in \C^d$. 
Let $J=\langle {f}_1, \dots, {f}_r \rangle \subset \mathcal{O}_{\mathbb{C}^d,O}$ and $g\in \mathcal{O}_{\mathbb{C}^d,O}$. 
Then the following are equivalent. 
\begin{enumerate}[{(}1{)}]
\item\label{(i)} $g \in \overline{J}$. 
\item\label{(ii)} There exist a open neighborhood $U$ of $O$ and a constant $C>0$ such that for any $x\in U$, 
\[
\abs{g(x)} \le C(\abs{{f}_1(x)}+\cdots \abs{{f}_r(x)}). 
\]
\item $g\in H^0(X,\mathcal{O}_X(-F))$ where $X\to (\C^d,O)$ is a log resolution of $J$, and $F$ is the divisor $J\mathcal{O}_X=\mathcal{O}_X(-F)$. 
\end{enumerate}
\end{theorem}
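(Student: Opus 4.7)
The plan is to prove the theorem by establishing $(1) \Rightarrow (2)$, then $(2) \Rightarrow (1)$, and separately the equivalence $(1) \Leftrightarrow (3)$ via the universal property of the log resolution.

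For $(1) \Rightarrow (2)$, I start from an integral equation $g^m + c_1 g^{m-1} + \cdots + c_m = 0$ with $c_i \in J^i$. Writing each $c_i$ as a finite $\mathcal{O}_{\C^d,O}$-linear combination of $i$-fold products of the $f_j$, I can choose a small neighborhood $U$ of $O$ and constants $A_i > 0$ with $|c_i(x)| \le A_i N(x)^i$ on $U$, where $N(x) = |f_1(x)| + \cdots + |f_r(x)|$. If $N(x) = 0$ the equation forces $g(x) = 0$ and the inequality is trivial; otherwise setting $t(x) = |g(x)|/N(x)$ and dividing $|g(x)|^m \le \sum_i A_i N(x)^i |g(x)|^{m-i}$ by $N(x)^m$ gives $t^m \le \sum_{i=1}^m A_i t^{m-i}$, and elementary analysis of this one-variable polynomial inequality bounds $t(x)$ by a universal constant $C$, yielding $(2)$.

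The implication $(2) \Rightarrow (1)$ is the main obstacle, as it is the only direction that truly requires analytic input. I would argue by contrapositive using the curve selection lemma together with the valuative criterion for integral closure: if $g \notin \overline{J}$, then there exists an analytic arc $\varphi : (\C, 0) \to (\C^d, O)$ with $\operatorname{ord}_t \varphi^* g < \operatorname{ord}_t \varphi^* f_j$ for every $j$. Along this arc one has $|g \circ \varphi(t)| \sim |t|^a$ while $|f_j \circ \varphi(t)| = O(|t|^{b_j})$ with $a < \min_j b_j$, so the ratio $|g \circ \varphi(t)| / \sum_j |f_j \circ \varphi(t)|$ is unbounded as $t \to 0$, contradicting $(2)$.

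For $(1) \Leftrightarrow (3)$, let $\pi : X \to (\C^d, O)$ be the log resolution with $J \mathcal{O}_X = \mathcal{O}_X(-F)$. In the forward direction, pulling back an integral equation for $g$ yields $(\pi^* g)^m + \pi^* c_1 (\pi^* g)^{m-1} + \cdots + \pi^* c_m = 0$ with $\pi^* c_i \in \mathcal{O}_X(-iF)$; examining the order of vanishing of $\pi^* g$ along each prime component of $F$ forces $\pi^* g \in \mathcal{O}_X(-F)$, i.e., $g \in H^0(X, \mathcal{O}_X(-F))$. Conversely, if $\pi^* g \in \mathcal{O}_X(-F) = J \mathcal{O}_X$, then by the standard characterization of integral closure through the normalized blowup of $J$ (through which any log resolution factors), the element $g$ lies in $\overline{J}$, closing the equivalence.
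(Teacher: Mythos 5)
The paper does not actually prove this theorem: it states it and refers to Teissier \cite{Te} for the equivalence of (1) and (2), with no argument given for (3). Your outline is therefore doing more than the text, and it is essentially correct as a sketch. The step $(1)\Rightarrow(2)$ (bounding $t=|g|/N$ by the roots of $t^m\le\sum_i A_i t^{m-i}$) and the equivalence $(1)\Leftrightarrow(3)$ (the order-of-vanishing computation along the components of $F$, plus the projection formula $\rho_*(J\mathcal{O}_X)=J\mathcal{O}_Y$ for the factorization through the normalized blowup $Y$) are the standard arguments and are sound. The one soft spot is $(2)\Rightarrow(1)$: the ``valuative criterion'' you invoke --- that $g\notin\overline{J}$ produces an analytic arc $\varphi$ with $\operatorname{ord}_t\varphi^*g<\min_j\operatorname{ord}_t\varphi^*f_j$ --- is the Lejeune-Jalabert--Teissier criterion, a theorem of essentially the same depth as the implication being proved (its own proof again passes through resolution or the normalized blowup), so this step is a heavy black box rather than an argument; note also that the curve selection lemma by itself does not supply such an arc. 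A more economical and self-contained route is to prove $(2)\Rightarrow(3)$ directly: condition (2) pulls back to say that $\pi^*g/h$ is locally bounded on $X$, where $h$ is a local generator of the invertible sheaf $J\mathcal{O}_X=\mathcal{O}_X(-F)$, hence extends holomorphically since $X$ is normal, giving $\pi^*g\in\mathcal{O}_X(-F)$; combined with your $(3)\Rightarrow(1)$ this closes the cycle without appealing to the arc criterion.
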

See \cite{Te} for the equivalence of (\ref{(i)}) and (\ref{(ii)}). 
\begin{example}
Let $J=\langle x^2,y^2\rangle\subset \mathcal{O}_{\C^2,O}$, $g=xy$. Then $g\in \overline{J}$ since $g^2-c=0$ where $c=x^2y^2\in J^2$. 

As $\abs{xy}\le \frac{1}{2}(\abs{x^2}+\abs{y^2})$, $g$ and $J$ satisfy (2). 

The blowing-up $X\to \C^2$ of $\C^2$ at the origin is a log resolution of $J$. 
Then $X=U\cup V$ where 
$U=\{(x,y/x)\ \mid x\neq 0\}$ and $V=\{(x/y,y)\ \mid y\neq 0\}$. 
On $U$, $g=x^2\cdot {y/x} \in J\mathcal{O}_{U}=\langle x^2, x^2(y/x)^2\rangle=\langle x^2\rangle $, and  
on $V$, $g=y^2\cdot {x/y} \in J\mathcal{O}_{V}=\langle y^2(x/y)^2, y^2\rangle =\langle y^2\rangle$. 
Thus $g$ and $J$ satisfy (3). 
\end{example}
For simplicity, we consider a complete local ring $R=S/I$ of dimension $d$ where $S=K\llbracket x_1,\dots,x_n \rrbracket$. 
We denote by $\n$ the unique maximal ideal $\m/I$ of $R$. 

\begin{definition}
The Hilbert--Samuel multiplicity $e_R(J)$ of an $\n$-primary ideal $J\subset R$ is defined by 
\[
e_R(J)=\lim_{k\to \infty}\frac{d!}{k^d}\ell(R/J^{k}). 
\]
\end{definition}
\begin{definition}
Let $Q\subset J\subset R$ be ideals. $Q$ is said to be a \emph{reduction} of $J$ if 
$QJ^r=J^{r+1}$ for some integer $r>0$. 
\end{definition}
Reductions and Hilbert--Samuel multiplicities are useful tools for theory of integral closure. 
\begin{theorem}[\cite{Rees}]\label{cl and HS}
Assume that $R=S/I$ is equidimension. Let $J_1\subset J_2\subset R$ be $\n$-primary ideals. 
Then $e_R(J_1)=e_R(J_2)$ if and only if $J_2\subset \overline{J}_1$. 
\end{theorem}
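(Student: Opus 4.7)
The plan is to prove the two implications separately, with the difficulty concentrated in the reverse direction.

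For the implication $J_2 \subset \overline{J_1} \Rightarrow e_R(J_1)=e_R(J_2)$, the strategy is to upgrade integral dependence to a reduction relation. Since $J_2$ is finitely generated and each generator is integral over $J_1$, the determinantal trick (equivalently, module-finiteness of $R[J_2 t]$ over $R[J_1 t]$) yields an integer $r>0$ with $J_1 J_2^r = J_2^{r+1}$. Inductively $J_2^{k+r} = J_1^k J_2^r$, and in particular
\[
J_2^{k+r} \;\subset\; J_1^k \;\subset\; J_2^k.
\]
Applying $\ell(R/\,\cdot\,)$ reverses inclusions, giving $\ell(R/J_2^k) \le \ell(R/J_1^k) \le \ell(R/J_2^{k+r})$. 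Dividing by $k^d/d!$ and letting $k\to\infty$, the outer terms both tend to $e_R(J_2)$ because $(k+r)^d/k^d \to 1$, so $e_R(J_1)=e_R(J_2)$ by the squeeze.

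The reverse direction is the main obstacle, and it is here that the formal equidimensionality hypothesis is essential. The plan is to follow Rees's original route via valuations. The key tool is the finite set of \emph{Rees valuations} $v_1,\dots,v_s$ of $J_1$ on the total ring of fractions of $R$---arising from the normalized blow-up of $J_1$, or equivalently as the discrete valuations associated to the minimal primes of $J_1$ in the integral closure of the extended Rees algebra---which jointly characterize integral closure by
\[
\overline{J_1^k} \;=\; \bigl\{\, x \in R \;\bigm|\; v_i(x) \ge k\, v_i(J_1)\ \text{for all } i \,\bigr\}.
\]
Under formal equidimensionality, each such minimal prime contracts to a quotient of dimension $d$, which yields the associativity formula
\[
e_R(J_1) \;=\; \sum_{i=1}^{s} v_i(J_1)\, d_i
\]
with positive integers $d_i$ depending only on the Rees valuations.

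Granted this formula, the reverse direction follows quickly. Monotonicity of multiplicity gives $e_R(J)\le e_R(J_1)$ for every $\n$-primary ideal $J$ with $J_1\subset J\subset J_2$, and the equality $e_R(J_1)=e_R(J_2)$ forces equality along the entire chain. Since $v_i(J)\ge v_i(J_1)$ for each $i$ and the coefficients $d_i$ are strictly positive, the associativity formula pins down $v_i(J)=v_i(J_1)$ for every Rees valuation of $J_1$. Applying this to $J = J_1 + xR$ for each $x\in J_2$ and invoking the valuation characterization of $\overline{J_1}$ yields $x\in \overline{J_1}$, hence $J_2 \subset \overline{J_1}$. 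The hardest technical step to justify honestly is the existence, finiteness, and multiplicity formula for the Rees valuations; these rest on module-finiteness of the integral closure of $R[J_1 t]$, which requires $R$ to be analytically unramified---automatic here since $R=S/I$ is a quotient of the complete ring $S=K\llbracket x\rrbracket$---together with the equidimensionality hypothesis guaranteeing that every minimal prime contributes the ``correct'' top-dimensional term to $e_R(J_1)$.
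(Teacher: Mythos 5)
The paper does not prove this statement at all---it is quoted from Rees (1961) as a known black box---so there is no in-paper argument to compare against; I am assessing your proof on its own. Your forward implication is correct and standard: since $J_1\subset J_2\subset\overline{J_1}$ and $J_2$ is finitely generated, $J_1$ is a reduction of $J_2$, and the squeeze $J_2^{k+r}\subset J_1^k\subset J_2^k$ gives equality of multiplicities. (This direction needs no equidimensionality, as you note.)

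The reverse direction, however, has a genuine gap at its load-bearing step. You invoke a formula $e_R(J)=\sum_i v_i(J)\,d_i$ over the Rees valuations of $J_1$ with positive coefficients $d_i$ ``depending only on the Rees valuations,'' and then apply it to every $J$ with $J_1\subset J\subset J_2$. No such formula holds in dimension $\ge 2$: take $R=\C\llbracket x,y\rrbracket$, $J_1=\langle x^4,y^4\rangle$, whose unique Rees valuation is the order valuation $v$ with $v(J_1)=4$ and $e(J_1)=16$, forcing $d_1=4$; but $J=\langle x^3,xy,y^4\rangle\supset J_1$ has $v(J)=2$ and $e(J)=7\neq 2\cdot 4$ (Newton-polygon computation). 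The correct intersection-theoretic formula on the normalized blow-up has coefficients $(-1)^{d-1}D^{d-1}\cdot E_i$ that depend on $D$, i.e.\ on the ideal itself; only in dimension one are they intrinsic to the valuations. The formula does hold for those $J$ in your chain satisfying $e(J)=e(J_1)$, but only as a consequence of the theorem you are trying to prove ($J\subset\overline{J_1}$ forces $v_i(J)=v_i(J_1)$), so using it there is circular. This step is precisely where the real content of Rees's theorem lives; the classical proofs replace it with Rees's degree functions (a formula linear in $v(x)$ for fixed ideal, with coefficients depending on the ideal), or with the Rees--Sharp/Teissier mixed-multiplicity argument. Two smaller points: since $J_1\subset J$ you have $v_i(J)\le v_i(J_1)$, not $\ge$; and analytic unramifiedness of $R=S/I$ is \emph{not} automatic from completeness ($K\llbracket x,y\rrbracket/\langle x^2\rangle$ is complete and equidimensional but not reduced)---one must first reduce to complete local domains via the additivity formula, which is also where equidimensionality is genuinely consumed.
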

By this theorem, 
we are able to solve the membership problem for the integral closure of an $\n$-primary ideal using  Hilbert--Samuel multiplicity. 
\begin{corollary}
Assume that $R=S/I$ is equidimension. 
For $f\in R$ and $\n$-primary ideal $J$, $f\in \overline{J}$ if and only if $e_R(J)=e_R(\langle J,f\rangle)$. 
\end{corollary}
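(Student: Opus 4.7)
The plan is to deduce the corollary directly from Theorem~\ref{cl and HS} (Rees) applied to the pair $J_1=J$ and $J_2=\langle J,f\rangle$. First I would dispose of the trivial case $f\notin\n$: since $R$ is local, $f$ is then a unit, so $\langle J,f\rangle=R$, and neither side of the biconditional holds (the right side fails because $e_R(R)=0$ while $e_R(J)>0$, and the left side fails because $\overline J\subseteq\n$ as $\n$ is prime hence integrally closed). So I may assume $f\in\n$.

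Next I would verify the setup needed to invoke Theorem~\ref{cl and HS}. Since $J\subseteq\langle J,f\rangle$ and $J$ is $\n$-primary, $\langle J,f\rangle$ contains an $\n$-primary ideal; together with $f\in\n$ this makes $\langle J,f\rangle$ a proper $\n$-primary ideal. Thus $J_1\subseteq J_2$ are both $\n$-primary ideals of the equidimensional ring $R$, which is exactly the hypothesis of Theorem~\ref{cl and HS}.

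With this in place the two directions fall out immediately. For the forward direction, if $f\in\overline J$ then since $\overline J$ is an ideal containing $J$ we have $\langle J,f\rangle\subseteq\overline J$, and Theorem~\ref{cl and HS} yields $e_R(J)=e_R(\langle J,f\rangle)$. For the converse, the equality of multiplicities combined with Theorem~\ref{cl and HS} gives $\langle J,f\rangle\subseteq\overline J$, and in particular $f\in\overline J$.

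There is no real obstacle here; the only minor point worth checking is that the equidimensional hypothesis is transferred correctly (it is, since it is a hypothesis on $R$ rather than on the ideals) and that $\langle J,f\rangle$ remains $\n$-primary, which is the case handling above. The statement is essentially a packaged form of Rees's theorem specialized to a one-element extension of $J$.
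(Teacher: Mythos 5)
Your proof is correct and follows the same route the paper intends: the corollary is stated as an immediate consequence of Theorem~\ref{cl and HS} applied to $J_1=J\subseteq J_2=\langle J,f\rangle$, and the paper offers no separate proof. Your additional care in checking that $\langle J,f\rangle$ remains a proper $\n$-primary ideal is a reasonable (if routine) supplement.
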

Since Cohen--Macaulay rings are equidimensional, this criterion is applicable if $R$ is Cohen--Macaulay. 
\begin{proposition}\label{membership}
If $J_1$ is a reduction of $J_2$, then $e_R(J_1)=e_R(J_2)$. 
\end{proposition}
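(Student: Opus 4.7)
The plan is to use the hypothesis $J_1 J_2^r = J_2^{r+1}$ to sandwich the powers of $J_1$ between two cofinal sequences of powers of $J_2$, and then exploit the definition of Hilbert--Samuel multiplicity as a limit.

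First, I would promote the single relation $J_1 J_2^r = J_2^{r+1}$ to the family of relations $J_1^k J_2^r = J_2^{r+k}$ for all $k \ge 0$, which follows by a straightforward induction on $k$ (multiply the inductive hypothesis by $J_1$ and apply the base case to the factor $J_1 J_2^r$). In particular this gives the inclusion $J_2^{r+k} \subset J_1^k$. Combined with the obvious inclusion $J_1^k \subset J_2^k$ coming from $J_1 \subset J_2$, this yields the chain
\[
J_2^{r+k} \subset J_1^k \subset J_2^k
\]
for every $k \ge 0$.

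Taking lengths (note that all three modules are of finite length since $J_2$, and hence $J_1$, is $\n$-primary) reverses the inclusions, so
\[
\ell(R/J_2^k) \le \ell(R/J_1^k) \le \ell(R/J_2^{r+k}).
\]
Multiplying by $d!/k^d$ and letting $k \to \infty$, the left-hand side tends to $e_R(J_2)$ by definition, while the right-hand side equals $\bigl((r+k)/k\bigr)^d \cdot d!\,\ell(R/J_2^{r+k})/(r+k)^d$, which also tends to $e_R(J_2)$ because $(r+k)/k \to 1$. The squeeze then forces the middle term to converge, and its limit is $e_R(J_1)$ by definition.

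There is no real obstacle here; the only mild point to be careful about is the shift from $k$ to $r+k$ on the right, which needs the factor $(r+k)^d/k^d \to 1$ to absorb the change of normalization. The argument uses $J_2$ being $\n$-primary only to ensure finite lengths, and makes no use of the Cohen--Macaulay hypothesis.
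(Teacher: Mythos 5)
Your argument is correct. Note that the paper itself gives no proof of this proposition: it is stated as a known fact (with the general theory deferred to the references on reductions and multiplicities), so there is no in-paper argument to compare against. Your proof is the standard, self-contained one: the induction $J_1^k J_2^r = J_2^{r+k}$ is right, the resulting sandwich $J_2^{r+k}\subset J_1^k\subset J_2^k$ gives $\ell(R/J_2^k)\le \ell(R/J_1^k)\le \ell(R/J_2^{r+k})$, and the renormalization factor $((r+k)/k)^d\to 1$ handles the index shift. Two small points worth making explicit: first, $e_R(J_1)$ is defined only once you know $J_1$ is $\n$-primary, which follows from $J_2^{r+1}=J_1J_2^r\subset J_1\subset J_2$ and hence $\sqrt{J_1}=\sqrt{J_2}=\n$; second, you do not actually need the squeeze to produce convergence of the middle term, since the limit defining $e_R(J_1)$ already exists by the existence of the Hilbert--Samuel polynomial — the sandwich merely pins that limit between two sequences both tending to $e_R(J_2)$. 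As you observe, no Cohen--Macaulay hypothesis is used, consistent with the paper invoking that hypothesis only later (in Corollary \ref{CM case}) for the converse direction.
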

\begin{corollary}\label{CM case}
Assume that $R$ is Cohen--Macaulay. Let $J\subset R$ be an $\n$-primary ideal, 
and $Q\subset$ an $\n$-primary ideal generated by $d$ elements. 
Then $e_R(J)=\ell(R/Q)$ if and only if $Q$ is a reduction of $J$. 
\end{corollary}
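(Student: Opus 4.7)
The plan is to combine Proposition \ref{membership} and Theorem \ref{cl and HS} with the classical length/multiplicity identity for a parameter ideal in a Cohen--Macaulay ring. Since $R$ is Cohen--Macaulay of dimension $d$ and $Q$ is $\n$-primary generated by $d$ elements, those generators form a system of parameters, and by the Cohen--Macaulay hypothesis they form a regular sequence. A classical theorem of Serre then yields $e_R(Q)=\ell(R/Q)$, which will serve as the bridge between $e_R(J)$ and $\ell(R/Q)$.

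With this identity in hand, the ($\Leftarrow$) direction is immediate: if $Q$ is a reduction of $J$, Proposition \ref{membership} gives $e_R(Q)=e_R(J)$, and Serre's formula then yields $e_R(J)=\ell(R/Q)$. For the ($\Rightarrow$) direction, suppose $e_R(J)=\ell(R/Q)$. Then $e_R(Q)=\ell(R/Q)=e_R(J)$, and since $R$ is Cohen--Macaulay (hence equidimensional), Theorem \ref{cl and HS} applied to the inclusion $Q\subset J$ gives $J\subset\overline{Q}$. Combined with $Q\subset J$, this gives $\overline{Q}=\overline{J}$, which is the standard characterization of $Q$ being a reduction of $J$ (see \cite{HS}).

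The only step that is not a direct citation of something in the preliminaries is the last implication, that $J\subset\overline{Q}$ forces $Q$ to be a reduction of $J$. If needed I would supply a short argument: $J\subset\overline{Q}$ makes the Rees algebra $R[Jt]$ integral, and therefore module-finite, over $R[Qt]$, so comparing graded components in sufficiently high degree yields $J^{r+1}=QJ^r$ for some $r$. This is the only technical point beyond what has already been established, but it is entirely routine and well documented, so no real obstacle arises; the proof is essentially a packaging of Serre's parameter-ideal formula with the two results already cited.
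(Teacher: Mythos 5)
The paper states this corollary without proof, and your argument supplies exactly the intended derivation: the classical identity $e_R(Q)=\ell(R/Q)$ for a parameter ideal of a Cohen--Macaulay local ring, combined with Proposition \ref{membership} for one direction and with Theorem \ref{cl and HS} plus the standard equivalence between $J\subseteq\overline{Q}$ and $Q$ being a reduction of $J$ (which you correctly justify via module-finiteness of the Rees algebra) for the other. Your proof is correct; the only point to note is that the statement's garbled hypothesis ``$Q\subset$'' must be read as $Q\subseteq J$, which you assume implicitly and which is needed both for the notion of reduction and to apply Theorem \ref{cl and HS}.
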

It is known that $d$ generic liner combinations of a system of generators of $J$ generates a reduction of $J$. 
{A subset $U\subset K^n$ is called a {\em Zariski open set} if $U=K^n\backslash V_K(\a)$ for some $\a\subset K[t_1,\dots,t_n]$ 
where $V_K(\a)=\{\alpha \in K^n \mid f(\alpha)=0 \mbox{ for all } f\in \a\}$.}
\begin{theorem}\label{red}({See \cite{Matsumura} Theorem 14.14})
Let $J=\langle {f}_1,\dots,{f}_m \rangle$, and assume that $K$ is an infinite field. 
There exists a Zariski open set $U\subset K^{d\times m}$, such that for $(a_{ij})_{i,j} \in U$, 
$Q=\langle {g}_1,\dots, {g}_d \rangle$, ${g}_j=\sum_i a_{ij}{f}_i$ $(1\le j\le d)$, is a reduction of $J$. 
\end{theorem}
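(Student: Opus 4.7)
The plan is to translate the reduction condition into a statement about the \emph{fiber cone} $F := \bigoplus_{k\ge 0} J^k/\mathfrak{n}J^k$, a standard graded $K$-algebra (where $K = R/\mathfrak{n}$), and then invoke graded Noether normalization. The key dictionary I would use is that $Q = \langle g_1,\dots,g_d\rangle \subset J$ is a reduction of $J$ if and only if the classes $\bar g_1,\dots,\bar g_d \in F_1 = J/\mathfrak{n}J$ generate an ideal of $F$ whose radical is the irrelevant ideal $F_+$, equivalently $F/(\bar g_1,\dots,\bar g_d)F$ has Krull dimension zero. Indeed, the condition $QJ^r = J^{r+1}$ is by Nakayama equivalent to the images spanning $F_{r+1}$ for large $r$, which characterizes $(\bar g_1,\dots,\bar g_d)$ as a homogeneous system of parameters (HSOP) for $F$.

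Once this dictionary is in hand, the analytic spread of an $\mathfrak{n}$-primary ideal in a $d$-dimensional local ring is $d$, so $\dim F = d$. Since $K$ is infinite, graded Noether normalization applied to $F$ (which is generated in degree one by $\bar f_1,\dots,\bar f_m$) yields $d$ elements of $F_1$ forming a HSOP, and these may be taken to be $K$-linear combinations of the $\bar f_i$. The explicit construction is a prime-avoidance argument: enumerate the minimal primes $\mathfrak{p}_1,\dots,\mathfrak{p}_s$ of $F$ with $\dim F/\mathfrak{p}_i = d$; for each $i$, the set of $(a_1,\dots,a_m) \in K^m$ with $\sum_j a_j \bar f_j \in \mathfrak{p}_i$ is a proper $K$-linear subspace of $K^m$, and the infiniteness of $K$ lets one pick $\bar g_1$ outside the finite union of these subspaces. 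Iterating in $F/(\bar g_1)$ (whose dimension has dropped by one) produces $\bar g_1,\dots,\bar g_d$ forming a HSOP.

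The main subtlety, and the step deserving most care, is showing that the valid tuples $(a_{ij})$ actually form a Zariski \emph{open} subset of $K^{d\times m}$, not merely a dense one. Geometrically the HSOP condition says that $\bar g_1,\dots,\bar g_d$ cut out the empty set in $\mathrm{Proj}\,F$; over the universal family parametrized by $\mathbb{A}^{d\times m}$, properness of the projection $\mathrm{Proj}\,F \times \mathbb{A}^{d\times m} \to \mathbb{A}^{d\times m}$ together with upper semicontinuity of fiber dimension makes the locus of tuples for which the cut-out scheme is nonempty Zariski closed, and its complement is the desired $U$. Alternatively, one can give explicit polynomial defining equations for the complement by comparing the degrees of $\bar g_1,\dots,\bar g_d$ against a fixed HSOP of $F$ via a resultant-type construction.
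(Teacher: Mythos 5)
Your proof is correct and is essentially the argument behind the paper's citation (Matsumura, Theorem 14.14): the paper gives no proof of its own, and the cited source likewise passes to the fiber cone $\bigoplus_{k\ge 0} J^k/\mathfrak{n}J^k$, identifies reductions with degree-one homogeneous systems of parameters via Nakayama, and produces them by graded Noether normalization (prime avoidance) over the infinite residue field. Your properness-of-$\operatorname{Proj}$ argument correctly upgrades density of the good tuples to a genuine nonempty Zariski open set $U$, so nothing is missing.
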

\section{Composition series with term orders}
Recall that $S=K\llbracket x\rrbracket=K\llbracket x_1,\dots,x_n\rrbracket$. 
In this section, we define term orders, and give composition series of $N\subset E$ with $\ell(N)<\infty$ uniquely determined by a given term order. 
We need this composition series later for constructing algorithms. 

\begin{definition}\label{def:term order}
A total order $\prec$ on the set of terms of $E$ is called \emph{term order} on $E$ if for any $\alpha,~\beta,~\gamma\in \Z_{\ge 0}^n$, the following conditions hold
\begin{enumerate}[{(}1{)}]
\item $\cfrac{1}{x^1}\preceq \cfrac{1}{x^{\alpha+1}}$ ,
\item $\cfrac{1}{x^{\alpha+1}}\prec \cfrac{1}{x^{\beta+1}}$ implies $\cfrac{1}{x^{\alpha+\gamma+1}}\prec \cfrac{1}{x^{\beta+\gamma+1}}$. 
\end{enumerate}
\end{definition}
In Gr\"obner basis theory, a total oder $\prec$ on the set of terms of the polynomial ring $K[x_1,\dots,x_n]$ 
is said to be a term order if for any $\alpha,~\beta,~\gamma\in \Z_{\ge 0}^n$, the following conditions hold
(1) $1\preceq x^\alpha$, and (2) $x^\alpha\prec x^\beta$ implies $x^{\alpha+\gamma}\prec x^{\beta+\gamma}$. 
Giving a term order defined in Definition \ref{def:term order} is essentially equivalent to giving a term order in meaning of Gr\"obner basis theory, 
thus it is easy to implement in a computer algebra system equipped with a package for Gr\"obner basis. 
It is known that term orders on the set of terms of the polynomial rings are well-ordering. 
Thus we have the following. 
\begin{proposition}
A term order on $E$ is a well-ordering. 
\end{proposition}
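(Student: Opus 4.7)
The plan is to reduce the statement to the well-known fact that term orders on polynomial rings are well-orderings, via the obvious bijection between terms of $E$ and monomials of $K[x_1,\dots,x_n]$.

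First I would set up the bijection
\[
\Phi\colon \Z^n_{\ge 0} \longrightarrow \{\text{terms of } E\}, \qquad \alpha \longmapsto \frac{1}{x^{\alpha+1}},
\]
and use it to transport $\prec$ to a total order $\prec'$ on the set of monomials of $K[x]$: declare $x^\alpha \prec' x^\beta$ if and only if $\Phi(\alpha) \prec \Phi(\beta)$. The point of this paragraph is to verify that $\prec'$ is then a term order in the Gr\"obner basis sense. Under $\Phi$, the monomial $1 = x^0$ corresponds to $1/x^1$, so condition (1) of Definition~\ref{def:term order} translates to $1 \preceq' x^\alpha$ for every $\alpha \in \Z^n_{\ge 0}$; condition (2) translates directly to the multiplicative compatibility $x^\alpha \prec' x^\beta \Rightarrow x^{\alpha+\gamma} \prec' x^{\beta+\gamma}$. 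So $\prec'$ satisfies both axioms of a term order on $K[x]$.

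Next I would invoke the standard fact (proved via Dickson's lemma) that any term order on the monomials of $K[x_1,\dots,x_n]$ is a well-ordering: every nonempty set of monomials has a $\prec'$-minimal element. Transporting back through $\Phi$, which is an order-isomorphism by construction, gives the corresponding statement for $\prec$ on the terms of $E$, proving the proposition.

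There is essentially no obstacle here; the proof is entirely bookkeeping around the bijection $\Phi$. The only thing to be slightly careful about is the direction of the correspondence—since the exponent vector of a term of $E$ is $\alpha+1$ rather than $\alpha$, one should check that both axioms translate correctly before citing the polynomial case, but this is immediate since the shift by $(1,\dots,1)$ is compatible with addition.
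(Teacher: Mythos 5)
Your proof is correct and follows exactly the route the paper intends: the paper itself observes that a term order on $E$ is ``essentially equivalent'' to a Gr\"obner-basis term order on the monomials of $K[x_1,\dots,x_n]$ and then cites the known well-ordering property; you have merely written out the transport of structure that the paper leaves implicit.
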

\begin{definition}
For $\eta=\sum c_\alpha \cfrac{1}{x^{\alpha+1}} \in E$, and a term order $\prec$ on $E$, we call
\[
\LT_\prec(\eta):=\max_\prec \left\{\cfrac{1}{x^{\alpha+1}} ~\middle|~ c_\alpha\neq 0 \right\}
\]
the \emph{leading term} of $\eta$ with respect to $\prec$. 
{We denote by $\LC_\prec(\eta)$ the coefficient of $\LT_\prec(\eta)$ in $\eta$, 
and set $\LM_\prec(\eta)=\LC_\prec(\eta)\LT_\prec(\eta)$. }
\end{definition}
Since any element of $E$ has only finitely many terms, the leading term is well-defined. 
For a subset $N\subset E$, $\LT_\prec(N)=\{\LT_\prec(\eta)\mid \eta \in N\}$. 
\begin{definition}
For an $S$-submodule of $E$, we call $N:_E\m:=\{\eta \in E\mid \m \eta \in N\}$ the \emph{colon module} of $N$. 
\end{definition}
\begin{lemma}\label{lem: colon}
If $\eta \in N:_E\m$ then 
$\LT_\prec(\eta) \in \LT_\prec(N):_E\m$. 
\end{lemma}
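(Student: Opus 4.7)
The plan is to reduce the statement to showing that for each variable $x_i$, the element $x_i \LT_\prec(\eta)$ either vanishes or coincides with $\LT_\prec(x_i\eta)$, so that the hypothesis $x_i\eta \in N$ immediately places $x_i\LT_\prec(\eta)$ inside (the submodule generated by) $\LT_\prec(N)$. The key ingredient is axiom~(2) of a term order, which says that multiplication by a monomial respects $\prec$ on the terms it does not annihilate.

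First, I would write $t := \LT_\prec(\eta) = \cfrac{1}{x^{\gamma+1}}$ for some $\gamma\in\Z^n_{\ge 0}$, and split into two cases according to whether $\gamma_i \ge 1$ or $\gamma_i = 0$, i.e., whether $x_i t \ne 0$ or $x_i t = 0$. The case $x_i t = 0$ is trivial since $0$ lies in any submodule. In the case $\gamma_i \ge 1$, the claim to establish is
\[
\LT_\prec(x_i\eta) \;=\; x_i\,\LT_\prec(\eta).
\]
To see this, write $\eta = \sum_\alpha c_\alpha \cfrac{1}{x^{\alpha+1}}$ with $c_\gamma \ne 0$. Only those terms with $\alpha_i \ge 1$ survive multiplication by $x_i$, and for each such surviving term $\cfrac{1}{x^{\alpha+1}} \preceq t$ with $\alpha \ne \gamma$, axiom~(2) applied to the shift by $e_i$ (i.e., $x_i=x^{e_i}$) yields $x_i\cdot \cfrac{1}{x^{\alpha+1}} \prec x_i\cdot t$. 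Since $c_\gamma \ne 0$ and $\gamma_i \ge 1$, the coefficient of $x_i t$ in $x_i\eta$ is nonzero, so $x_i t$ is indeed the leading term.

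Putting the cases together: for every $i$, $x_i\,\LT_\prec(\eta)$ is either $0$ or equals $\LT_\prec(x_i\eta)$, which lies in $\LT_\prec(N)$ because $x_i\eta\in N$ by hypothesis. Hence $\mathfrak{m}\cdot \LT_\prec(\eta) \subset \langle \LT_\prec(N)\rangle$, proving $\LT_\prec(\eta) \in \LT_\prec(N):_E\mathfrak{m}$.

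The only delicate step is the verification that $\LT_\prec(x_i\eta)=x_i\LT_\prec(\eta)$ when $\gamma_i\ge 1$, and its only real content is noticing that the terms killed by $x_i$ cannot interfere: the surviving terms of $\eta$ keep their relative order under multiplication by $x_i$, by the translation-invariance axiom of $\prec$. No further obstacles are expected; the argument is a short, direct unraveling of the definitions.
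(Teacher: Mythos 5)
Your proof is correct and follows essentially the same route as the paper's: for each $i$, show that $x_i\LT_\prec(\eta)$ is either $0$ or equal to $\LT_\prec(x_i\eta)$, which lies in $\LT_\prec(N)$ since $x_i\eta\in N$. You supply more detail than the paper on the key identity $\LT_\prec(x_i\eta)=x_i\LT_\prec(\eta)$ (the only quibble being that, since multiplication by $x_i$ lowers exponents, order-preservation follows from axiom (2) via totality of $\prec$ rather than by direct application), but the argument is the same.
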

\begin{proof}
For any $1\le i\le n$, $x_i\eta \in N$. 
It is easy to show that $\LT_\prec(x_i \eta)=x_i\LT_\prec(\eta)$ if $x_i\LT_\prec(\eta)\neq 0$. 
Thus $x_i\LT(\eta)=0$ or $x_i\LT_\prec(\eta)\in \LT_\prec(N)$ for any $i$. 
This {shows} that $\LT_\prec(\eta) \in \LT_\prec(N):_E\m$. 
\end{proof}
The Matlis {dual} of $S/M$ where $M$ is a monomial ideals is easy to understand. 
A monomial not contained in a monomial ideal $M$ is called a \emph{standard monomial} of $M$. 
\begin{lemma}\label{monomial case}
Let $M\subset S$ be a monomial ideal with {the} minimal system of monomial generators 
$\{x^{\alpha_1},\dots,x^{\alpha_k}\}$, $\alpha_i\in \Z_{\ge 0}^n$, and let $N=(S/M)^\vee$. 
Let $\displaystyle \bigcap_{i=1}^r \left\langle x_1^{\beta^{(i)}_1},\dots,x_n^{\beta^{(i)}_r}\right\rangle$ be the irredundant irreducible decomposition of $M$, 
and let $\beta^{(i)}=(\beta^{(i)}_1, \dots,\beta^{(i)}_n)$. 
Then the following hold. 
\begin{enumerate}[{(}1{)}]
\item $\left\{\cfrac{1}{x^{\beta+1}} ~\middle|~ \mbox{ $x^{\beta}$ is a standard monomial of $M$} \right\}$ is a basis of $N$ as a $K$-vector space.\\ 
\item If $\ell(S/M)<\infty$, then $\left\{\cfrac{1}{x^{\beta^{(1)}}},\dots, \cfrac{1}{x^{\beta^{(r)}}}\right\}$ is a minimal system of generators of $N$. \\
\item $N:_E\m$ is generated by $N$ and $\left\{\cfrac{1}{x^{\alpha_1+1}},\dots,\cfrac{1}{x^{\alpha_k+1}} \right\}$ as an $S$-module. 
\end{enumerate}
\end{lemma}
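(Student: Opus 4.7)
The three assertions unwind directly from the identification $(S/M)^\vee=\{\eta\in E\mid M\eta=0\}$ and the action formula for $E$ displayed earlier. For (1), expand $\eta=\sum c_\gamma \frac{1}{x^{\gamma+1}}$ and apply each minimal generator to obtain $x^{\alpha_i}\eta=\sum_{\gamma\ge \alpha_i}c_\gamma \frac{1}{x^{\gamma-\alpha_i+1}}$. Requiring $x^{\alpha_i}\eta=0$ for every $i$ forces $c_\gamma=0$ whenever some $\alpha_i\le \gamma$, i.e.\ whenever $x^\gamma\in M$; the remaining terms form the claimed $K$-basis.

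For (2), first note that for a single component $M_i=\langle x_1^{\beta^{(i)}_1},\dots,x_n^{\beta^{(i)}_n}\rangle$, part (1) identifies $(S/M_i)^\vee$ with the $K$-span of $\frac{1}{x^{\gamma+1}}$ for $0\le \gamma_j<\beta^{(i)}_j$, and this coincides with the cyclic $S$-submodule $S\cdot \frac{1}{x^{\beta^{(i)}}}$ (legitimate because finite length forces $\beta^{(i)}_j\ge 1$). The inclusions $M\subseteq M_i$ give a diagonal injection $S/M\hookrightarrow\bigoplus_i S/M_i$; dualizing and using exactness of $(-)^\vee$ produces a surjection $\bigoplus_i (S/M_i)^\vee\twoheadrightarrow N$, so $\{\frac{1}{x^{\beta^{(i)}}}\}_{i=1}^r$ generates $N$. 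For minimality, the $S$-submodule obtained by dropping the index $j$ consists of $K$-linear combinations of terms $\frac{1}{x^{\gamma+1}}$ satisfying $\gamma_\ell\le \beta^{(i)}_\ell-1$ for all $\ell$ and some $i\ne j$; for $\frac{1}{x^{\beta^{(j)}}}$ to lie there one would need $\beta^{(j)}_\ell\le \beta^{(i)}_\ell$ for all $\ell$ and some $i\ne j$, equivalently $M_i\subseteq M_j$, contradicting irredundancy of the decomposition.

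For (3), the easy containment uses minimality of the $x^{\alpha_i}$: each product $x_j\cdot \frac{1}{x^{\alpha_i+1}}$ is either $0$ or the term $\frac{1}{x^{(\alpha_i-e_j)+1}}$, and $x^{\alpha_i-e_j}\notin M$ (else $x^{\alpha_i}=x_j\cdot x^{\alpha_i-e_j}\in \m M$, contradicting minimality), so the product lies in $N$ by (1) and hence $\frac{1}{x^{\alpha_i+1}}\in N:_E\m$. Conversely, given $\eta\in N:_E\m$, split $\eta=\eta'+\eta''$ by separating terms supported on standard monomials (giving $\eta'\in N$) from the non-standard part $\eta''=\sum_{x^\gamma\in M}c_\gamma\frac{1}{x^{\gamma+1}}$. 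Then $\m\eta''\subseteq N$, and a term-by-term inspection of each $x_j\eta''$ (no cancellation occurs since distinct $\gamma$ give distinct $\gamma-e_j$) shows that $c_\gamma\ne 0$ forces $x^{\gamma-e_j}\notin M$ for every $j$ with $\gamma_j\ge 1$, i.e.\ $x^\gamma$ is a minimal generator of $M$. Hence $\eta''$ is a $K$-linear combination of $\frac{1}{x^{\alpha_1+1}},\dots,\frac{1}{x^{\alpha_k+1}}$, proving the reverse inclusion.

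The only delicate step is the converse direction in (3): one must split $\eta$ cleanly so that the minimal-generator condition drops out of the termwise equations $x_j\eta''\subseteq N$, and be careful that shifts $\gamma\mapsto \gamma-e_j$ are injective on the support of $\eta''$. Parts (1) and (2) are essentially bookkeeping once Matlis duality and the definition of irredundancy are isolated.
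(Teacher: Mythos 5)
Your proposal is correct and follows essentially the same route as the paper: part (1) by termwise annihilation, part (2) by dualizing the injection $S/M\hookrightarrow\bigoplus_i S/M_i$ into a surjection of Matlis duals, and part (3) by reducing to terms and invoking minimality of the generators. You are in fact slightly more thorough than the paper in two spots the paper leaves implicit --- the minimality of the generating set in (2) via irredundancy, and the reduction of a general $\eta\in N:_E\m$ to its non-standard part in (3) --- but the underlying argument is the same.
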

\begin{proof}
{
(1) Note that $x^{\alpha}\cdot \cfrac{1}{x^{\beta+1}}=0$ if and only if $x^{\alpha}$ does not divide $x^{\beta}$. 
Thus $\cfrac{1}{x^{\beta+1}}$ is contained in $N$ if and only if $x^{\beta} \not\in M$. 
As $N$ is generated by terms, the assertion holds. 
}

{
(2) Let $I_1,\dots,I_r \subset S$ be ideals, and $I=I_1\cap\dots\cap I_r$. By taking the Matlis dual of the natural injection 
$0\to S/I\to \bigoplus_{i=1}^r S/I_i$, 
we have a surjection $\bigoplus_{i=1}^r (S/I_i)^\vee\to (S/I)^\vee\to 0$. Thus $(S/)^\vee=\sum_{i=1}^r(S/I_i)^\vee$. 
}

{
Therefore, $(S/M)^\vee=\sum_{i=1}^r(S/M_i)^\vee$ where $M_i=\left\langle x_1^{\beta^{(i)}_1},\dots,x_n^{\beta^{(i)}_r}\right\rangle$. 
By (1), it is easy to show that $(S/M_i)^\vee$ is generated by $\cfrac{1}{x^{\beta^{(i)}}}$. 
}

{
(3) It is trivial that $N\subset N:\m$. 
By (1), it follows that $\m \cfrac{1}{x^{\alpha_i+1}} \in N$, 
and thus $\cfrac{1}{x^{\alpha_i+1}}\in N:\m$ for all $1\le i\le k$. 
Take $\alpha\in \Z_{\ge 0}^n$ such that $\cfrac{1}{x^{\alpha+1}}\in N:\m$ and $\cfrac{1}{x^{\alpha+1}}\not\in N$. 
As $\cfrac{1}{x^{\alpha+1}}\not\in N$, we have $x^\alpha \in M$ by (1). 
Since $x_i\cdot \cfrac{1}{x^{\alpha+1}} \in N$, $x_i$ does not divide $x^\alpha$, or $x_i$ divides $x^\alpha$ and $x^\alpha/x_i \not\in M$. 
This shows that $x^\alpha$ is a member of the minimal system of monomial generators of $M$. 
}
\end{proof}
For an $S$-submodule $N\subset E$, we define two term modules. 
\begin{definition}
We denote by $T_1(N)$  the submodule of $N$ generated by terms contained in $N$. 
\end{definition}
\begin{definition}
We denote by $T_2(N)$ the submodule of $E$ generated by terms appearing in $N$.
\end{definition}
We note that $T_1(N)\subset N \subset T_2(N)$, 
and $T_1(N)$ is the maximal $S$-submodule of $E$ contained in $N$ generated by terms of $E$, 
and $T_2(N)$ is the minimal $S$-submodule of $E$ containing $N$ generated by terms of $E$. 
For an ideal $J\subset S$, let $M_J$ be the minimal monomial ideal containing $J$. 
Then $(S/M_J)^\vee=T_1(N)$. 
{In the rest part of this paper, we fix a term order $\prec$ on $E$. }
\begin{theorem}\label{composition series}
Let $N\subset E$ be an $S$-module of finite length. 
Let $N_0=T_1(N)$, and define $\xi_i$ and $N_i$ inductively as follows: Choose a $\xi_i\in  N$ satisfying 
\[
\LT_\prec(\xi_i)=\min_\prec\{ \LT_\prec(\eta) \mid \eta\in  N,~\LT_\prec(\eta)\not\in \LT_\prec(N_{i-1}) \}, 
\]
and set $N_i=N_{i-1}+S\xi_i$. 
Then we obtain a sequence of $S$-modules
\[
N_0=T_1(N)\subset N_1 \subset N_2\subset \dots \subset N_{k-1}\subset N_k=N.   
\]
The following hold. 
\begin{enumerate}[{(}1{)}]
\item $N_{i}/N_{i-1} \cong K$ for all $1\le i\le k$. 
\item $\LT_\prec(\xi_0)\prec\LT_\prec(\xi_1)\prec\dots\prec\LT_\prec(\xi_{k-1})\prec\LT_\prec(\xi_k)$. 
\item $N_i$ does not depend on the choice of $\xi_i$. 
\item $\xi_i$ can be chosen so that any term of $\xi_i$ is not in $\LT_\prec(N_{i-1})$, and such $\xi_i$ is unique up to multiplication by $K^\times$. 
\end{enumerate}
\end{theorem}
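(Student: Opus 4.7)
The plan is to base everything on a single Gröbner-style reduction lemma and on the fact that a finite-length submodule of $E$ is determined by its set of leading terms. First I would record two basic properties of $\prec$ and the $S$-action on $E$: for $\eta\in E$ and $j$ with $x_j\eta\neq 0$, one has $\LT_\prec(x_j\eta)=x_j\cdot \LT_\prec(\eta)\prec \LT_\prec(\eta)$, immediate from axioms (1)--(2) of Definition \ref{def:term order}; and for any $S$-submodule $N'\subset E$ of finite length, $\ell(N')=|\LT_\prec(N')|$. The latter follows by picking, for each $t\in \LT_\prec(N')$, a representative with leading term $t$: these are $K$-linearly independent by the usual leading-term argument and they span $N'$ because any $\eta\in N'$ reduces, upon subtraction of an appropriate $K$-multiple of such a representative, to an element with strictly smaller leading term, a process that terminates since $\prec$ is a well-order. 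Consequently a finite-length submodule of $E$ is determined by its leading-term set, and the constructed chain must stop precisely when $\LT_\prec(N_i)=\LT_\prec(N)$, i.e.\ after $k=\ell(N)-\ell(T_1(N))$ steps.

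The key lemma I would then isolate is: if $\eta\in N$ satisfies $\LT_\prec(\eta)\in \LT_\prec(N_{i-1})$, then $\eta\in N_{i-1}$. One iterates $\eta\mapsto \eta-c\eta'$ with $\eta'\in N_{i-1}$ matching the leading term; each step strictly decreases $\LT_\prec(\eta)$, so well-ordering forces the reduction to terminate at $0$. For (1) I apply this to $x_j\xi_i$: the preceding observation gives $\LT_\prec(x_j\xi_i)\prec \LT_\prec(\xi_i)$, and the defining minimality of $\xi_i$ then forces $\LT_\prec(x_j\xi_i)\in \LT_\prec(N_{i-1})$, whence the lemma yields $x_j\xi_i\in N_{i-1}$. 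Hence $\m\xi_i\subset N_{i-1}$, so $N_i/N_{i-1}$ is a $K$-vector space generated by $\xi_i$; it is nonzero because $\xi_i\notin N_{i-1}$ (otherwise $\LT_\prec(\xi_i)\in \LT_\prec(N_{i-1})$, contradicting the choice of $\xi_i$), and therefore $N_i/N_{i-1}\cong K$.

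Statement (2) is immediate from the minimality defining the $\xi_i$: if $\LT_\prec(\xi_{i+1})\preceq \LT_\prec(\xi_i)$, the choice of $\xi_i$ forces $\LT_\prec(\xi_{i+1})\in \LT_\prec(N_{i-1})\subset \LT_\prec(N_i)$, contrary to the definition of $\xi_{i+1}$. For (3), given two candidates $\xi_i,\xi_i'$ sharing the leading term $t^\ast$, a $K$-combination $\xi_i-c\xi_i'$ cancelling the top coefficient has leading term $\prec t^\ast$, so by minimality that leading term lies in $\LT_\prec(N_{i-1})$, and the reduction lemma gives $\xi_i-c\xi_i'\in N_{i-1}$, whence $N_{i-1}+S\xi_i=N_{i-1}+S\xi_i'$.

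For (4) I would strip the bad terms (those in $\LT_\prec(N_{i-1})$) from $\xi_i$ starting with the largest: if the largest bad term is $t$ with coefficient $c$, and $\eta'\in N_{i-1}$ satisfies $\LT_\prec(\eta')=t$, the replacement $\xi_i-c\LC_\prec(\eta')^{-1}\eta'$ introduces only terms strictly below $t$, so the largest bad term strictly decreases and well-ordering forces termination at some $\bar\xi_i$ with leading term $t^\ast$ and no term in $\LT_\prec(N_{i-1})$. For uniqueness, if two such cleaned $\xi_i,\xi_i'$ coexisted, a top-coefficient-matching combination would be a nonzero element of $N$ with every term outside $\LT_\prec(N_{i-1})$ yet with leading term $\prec t^\ast$, contradicting the minimality of $t^\ast$. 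The main obstacle I anticipate is the bookkeeping in (4): one must verify that the iterative cleanup really descends in the well-order rather than oscillating, which requires checking that the subtracted $\eta'$ never reintroduces a bad term at or above the position being cleared.
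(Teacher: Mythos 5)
Your overall strategy is the same as the paper's: the paper proves the containment $A_{i-1}:=\{\eta\in N\mid \LT_\prec(\eta)\prec\LT_\prec(\xi_i)\}\subset N_{i-1}$ by the same minimal--leading-term reduction argument and derives (1)--(3) from it, while your treatment of (4) correctly fleshes out what the paper dismisses as ``Gaussian elimination''. The genuine problem is your key lemma: the statement ``$\eta\in N$ and $\LT_\prec(\eta)\in\LT_\prec(N_{i-1})$ imply $\eta\in N_{i-1}$'' is false, and your proof of it breaks exactly where it matters. After one reduction step $\eta\mapsto\eta-c\eta'$, the new element lies in $N$ and has strictly smaller leading term, but nothing guarantees that this new leading term still lies in $\LT_\prec(N_{i-1})$; if it does not, there is no $\eta'$ to reduce against, and well-ordering only forces termination at a possibly nonzero element outside $N_{i-1}$. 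Concretely, take $n=2$, $T=(S/\langle x^2,xy,y^5\rangle)^\vee$, $\xi=\frac{1}{x^3y}+\frac{1}{x^2y^2}$, and $N=T+K\xi$; one checks $\m\xi\subset T$, so $N$ is a finite-length $S$-module with $N_0=T_1(N)=T$ and $\LT_\prec(\xi_1)=\frac{1}{x^3y}$ for a graded order as in Section 5. Then $\eta=\frac{1}{xy^5}+\xi\in N$ satisfies $\LT_\prec(\eta)=\frac{1}{xy^5}\in\LT_\prec(N_0)$ (since $\frac{1}{xy^5}\succ\frac{1}{x^3y}$), yet $\eta\notin N_0$: the reduction gets stuck at $\xi$, whose leading term has escaped $\LT_\prec(N_0)$.

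The gap is local and repairable. In every application of the lemma ($x_j\xi_i$ in (1), the difference $\xi_i-c\xi_i'$ in (3)) you have already established the additional hypothesis $\LT_\prec(\eta)\prec\LT_\prec(\xi_i)$, and under that hypothesis the reduction cannot get stuck: each intermediate element also has leading term $\prec\LT_\prec(\xi_i)$, hence by the minimality defining $\xi_i$ its leading term must lie in $\LT_\prec(N_{i-1})$. So you should state and prove the lemma only for $\{\eta\in N\mid\LT_\prec(\eta)\prec\LT_\prec(\xi_i)\}$, which is exactly the paper's claim. A secondary imprecision: $\LT_\prec(x_j\eta)=x_j\cdot\LT_\prec(\eta)$ only when $x_j\cdot\LT_\prec(\eta)\neq 0$; in general $\LT_\prec(x_j\eta)=x_j\tau$ for the largest term $\tau$ of $\eta$ not annihilated by $x_j$, which still yields the inequality $\LT_\prec(x_j\eta)\prec\LT_\prec(\eta)$ that (1) actually needs. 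Parts (2) and (4), including your check that the cleanup in (4) strictly descends and cannot oscillate, are fine as written.
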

\begin{proof}
First, we claim that $N_{i-1}$ contains the set 
\[
A_{i-1}:=\{\eta \mid \eta \in N,~\LT_\prec(\eta)\prec \LT_\prec(\xi_i) \}
\]
for $1\le i\le k$. 
Assume to the contrary that $A_{i-1} \not\subset N_{i-1}$. 
Let $\eta\in A_{i-1}$ with $\LT_\prec(\eta)$ minimal among those $\eta\not\in N_{i-1}$. 
Then, by the choice of $\xi_i$, it holds that $\LT_\prec(\eta)\in \LT_\prec(N_{i-1})$. 
Take $\eta'\in N_{i-1}$ such that {$\LM_\prec(\eta)=\LM_\prec(\eta')$} and let $\zeta =\eta-\eta'$. 
Then $\zeta\neq 0$, $\zeta\in N$, $\zeta\not\in N_{i-1}$, and $\LT_\prec(\zeta)\prec \LT_\prec(\eta)$. 
As $\zeta \in A_{i-1}$, this contradicts the minimality of the leading term of $\eta$. 
Thus the claim is proved. 

Since $\LT_\prec(x_j\xi_i)\prec \LT_\prec(\xi_i)$, we have $x_j\xi_i \in A_{i-1}\subset N_{i-1}$ for any $j$. 
Thus $\m\xi_i \in  N_{i-1}$ which shows (1). 
To prove (2), assume to the contrary that $\LT_\prec(\xi_{i})\succ\LT_\prec(\xi_{i+1})$ for some $i$. 
{Note that $\LT_\prec(\xi_{i})\neq \LT_\prec(\xi_{i+1})$ by the definition of $\xi_i$'s. } 
Then $\xi_{i+1} \in A_{i-1} \subset N_{i-1}$ which contradicts to $\xi_{i+1}\not\in N_i$. Thus (2) holds true. 
We will prove (3). 
Let $\xi\in E$ be an element satisfying the same conditions as $\xi_i$ and 
$\xi_i \neq\xi$. {We may assume that $\LC_\prec(\xi_i)=\LC_\prec(\xi)$}. Then $\xi-\xi_i\in  N$ and $\LT_\prec(\xi-\xi_i)\prec \LT_\prec(\xi_i)$ as {$\LM_\prec(\xi_i)=\LM_\prec(\xi)$}. 
Hence $\xi-\xi_i\in N_{i-1}$, 
and thus $N_{i-1}+S\xi_i=N_{i-1}+S\xi$. 
(4) holds true by Gaussian elimination. 
\end{proof}
\begin{definition}
Let $J\subset S$ be an ideal with a system of generators $F=(f_1,\dots, f_m)$, $f_i\in S$,  
and $\Gamma=(\tau_1,\dots,\tau_p)$ a sequence of terms $E$ with $\tau_1\succ \dots \succ \tau_p$. 
For $1\le i\le m$,  
Let $s^{(i)}_1,\dots,s^{(i)}_{q_i}$ be all terms appearing in $\{f_i\tau_j \mid 1\le j\le p\}$. 
We define $M^{(i)}_{F,\Gamma}\in K^{q_i\times p}$ as a $q_i\times (p+1)$ matrix whose $(j,k)$-entry is the coefficient of $s^{(i)}_j$ in $f_i\tau_k$. 
We define $M_{F,\Gamma}$  as the block matrix 
$\left(\begin{array}{c}
M^{(1)}_{F,\Gamma}\\
\vdots\\
M^{(m)}_{F,\Gamma}
\end{array}
\right)$. 
\end{definition}
For $\Gamma=(\tau_1,\dots,\tau_p)$ and $c=(c_1,\dots,c_p)^T$ where ${v}^T$ denotes the transpose of the vector $v$, 
let $\eta=\Gamma \cdot c=\sum_i^p c_i \tau_i$. 
Then $M^{(i)}_{F,\Gamma}\cdot c=0$ if and only if $f_i\eta=0$. 
Hence $M_{F,\Gamma}\cdot c=0$ if and only if $J\eta=0$. 

We will give an algorithm for computing the Matlis dual of $S/J$ where $J$ is an $\m$-primary ideal (Algorithm \ref{alg: length}). 
We omit describing the details of computations for term modules. By Lemma \ref{monomial case}, they are computable by solving some combinatorial problems. 
\begin{algorithm}
\caption{Algorithm for computing $(S/J)^\vee$}
\label{alg: length}
\begin{algorithmic}[1]
\Require A system of generators $F=(f_1,\dots,f_m)$ of an $\m$-primary ideal $J\subset S$.
\Ensure A $K$-basis of $(S/J)^\vee$. 
\State Compute $B=\left\{ \cfrac{1}{x^{\alpha+1}} ~\middle|~ J\cfrac{1}{x^{\alpha+1}} =0 \right\}$, and $N\gets \langle B\rangle$. 
\State $L_1 \gets \emptyset$, 
$L_2 \gets \left\{ \cfrac{1}{x^{\alpha+1}} ~\middle|~ \cfrac{1}{x^{\alpha+1}} \in N:_E\m,~ \cfrac{1}{x^{\alpha+1}}\not\in N \right\}$. 
\While{$L_1\neq L_2$}
\State $\tau_0 \gets \min_{\prec}(L_2 \backslash L_1)$. 
\State $\Gamma \gets (\tau_0,\tau_1,\dots,\tau_p)$ where $\tau_1\succ \dots \succ \tau_p$ and 
\[
\{\tau_1,\dots,\tau_p\}=\left\{ \cfrac{1}{x^{\alpha+1}} ~\middle|~ 
\cfrac{1}{x^{\alpha+1}} \in T_2(N):_E\m,~ \cfrac{1}{x^{\alpha+1}} \not\in \LT_\prec(N),~ \cfrac{1}{x^{\alpha+1}} \prec \tau_0 \right\}. 
\]
\If{$M_{\Gamma,F}\cdot c=0$ for some $0\neq c=(c_0,c_1,\dots,c_p)^T\in K^{p+1}$}
\State $B\gets B\cup\{\xi\}$, $N\gets \langle N, \xi\rangle$ where $\xi:=\Gamma \cdot c=\sum c_i\tau_i$. 
\State $L_2 \gets \left\{ \cfrac{1}{x^{\alpha+1}} ~\middle|~ \cfrac{1}{x^{\alpha+1}} \in \LT_\prec(N):_E\m,~ \cfrac{1}{x^{\alpha+1}}\not\in \LT_\prec(N) \right\}$. 
\Else ~$L_1 \gets L_1\cup \left\{{\tau_0}\right\}$. 
\EndIf
\EndWhile 
\State \Return $B$
\end{algorithmic}
\end{algorithm}
\begin{proof}[Proof of the correctness of Algorithm \ref{alg: length}]
This algorithm is essentially same as the algorithm in \cite{TNN}. 
Here, we give a proof 

Since $S$ is a Noetherian ring and $(S/I)^\vee$ is a Noetherian $S$-module, 
the ascending condition holds for ideals of $S$ and submodules of $(S/I)^\vee$. 
In each step of the while loop, the monomial ideal $\langle x^\alpha \mid \frac{1}{x^{\alpha+1}} \in L_1\rangle \subset S$ or the $S$-submodule $N$ of $(S/I)^\vee$ 
become larger, this algorithm terminate in finite time. 

We will prove that $N$'s and $\xi$'s computed in this algorithm are $N_i$'s and $\xi_i$'s in Theorem \ref{composition series} provided $N=(S/J)^\vee$. 
The module $N$ in line 1 is $T_1((S/J)^\vee)$. 
Thus $0$-th $N$ coincides with $N_0$. 
Assume that $(i-1)$-th $N$ is obtained in the algorithm and it coincides with $N_{i-1}$. 
Since $\xi_i \in N_{i-1}$, we have $\LT_\prec(\xi_i) \in \LT_\prec(N_{i-1}):_E\m$. 
As $\xi_i\in N_{i-1}\subset T_2(N_{i-1})$, the terms of $\xi_i$ except for $\LT_\prec(\xi_i)$ is in 
\[
\left\{ \cfrac{1}{x^{\alpha+1}} ~\middle|~ 
\cfrac{1}{x^{\alpha+1}} \in T_2(N_{i-1}):_E\m,~ \cfrac{1}{x^{\alpha+1}} \not\in \LT_\prec(N),~ \cfrac{1}{x^{\alpha+1}} \prec \LT_\prec(\xi_i) \right\}. 
\]
As $L_1$ is the list of terms which are less than $\LT_\prec(\xi_{i-1})$ or proven not to be the leading term of $\xi_i$, 
$\min_\prec L_2\backslash L_1$ is the candidates of $\LT_\prec(\xi_i)$. 
Thus there exists $c$ satisfying the condition in line 6, the $\xi$ in line 7 should be $\xi_i$, and the new $N$ is $N_i$. 
Therefore, by induction, this algorithm construct the composition series in Theorem \ref{composition series}. 
Hence the output is $(S/J)^\vee$. 
\end{proof}
\section{Algorithm}
\subsection{An algorithm for computing Hilbert--Samuel multiplicities and reductions}
Let $I\subset S=K\llbracket x\rrbracket=K\llbracket x_1,\dots,x_n\rrbracket$ be an ideal with a system of generators $g_1,\dots, g_r$.  
We assume that $R=S/I$ is Cohen--Macaulay. {We denote by $\n$ the unique maximal ideal of $R$}. 
Let $f_1,\dots,f_m \in S$ such that $J:=\langle f_1,\dots,f_m\rangle_R \subset R$ {is} an $\n$-primary ideal. 

Let $\underline{t}=(t_{ij})_{1\le i\le d,d+1\le j\le m}$ where $t_{ij}$'s are indeterminates over $K$. 
Let $\K=K(t_{i,j}\mid i,j)$, $S'=\K\llbracket x_1,\dots,x_n\rrbracket$, and $R'=S'/IS'$. 
We set 
\begin{eqnarray*}
Q_{\underline{t}} &:=& \left\langle {f}_i+\sum_{j=d+1}^m t_{ij}{f}_j ~\middle|~ 1\le i\le d\right\rangle_{R'} \subset R', \\
P_{\underline{t}} &:=& \left\langle f_i+\sum_{j=d+1}^m t_{ij}f_j ~\middle|~ 1\le i\le d\right\rangle + IS'\subset S'. 
\end{eqnarray*}
For $\underline{a}=(a_{ij})_{i,j}  \in K^{d\times (m-d)}$, we set 
\[
Q_{\underline{a}} := \left\langle {f}_i+\sum_{j=d+1}^m a_{ij}{f}_j ~\middle|~ 1\le i\le d\right\rangle_{R} \subset R. 
\]
Since $Q_{\underline{t}}$ is a image of $P_{\underline{t}}$ in $R'$, we have $R'/Q_{\underline{t}}=S/P_{\underline{t}}$.

We will give an algorithm for computing Hilbert--Samuel multiplicities and reductions (Algorithm \ref{alg: main}). 
\begin{algorithm}\label{main alg}
\caption{Algorithm for computing $e_R(J)$ and reductions of $J$}
\label{alg: main}
\begin{algorithmic}[1]
\Require $\m$-primary ideal $J=\langle f_1,\dots,f_m\rangle_R \subset R$, and an ideal $I=\langle g_1,\dots,g_r\rangle \subset S$ such that $R/I$ is Cohen--Macaulay.
\Ensure $e_R(J)$, PolyList, MatList. 
\State  $\mathrm{PolyList}\gets \emptyset$, $ \mathrm{MatList}\gets \emptyset$. 
\State $F \gets ({f}_1+\sum_{i=d+1}^m t_{i1}{f}_i,\dots,{f}_d+\sum_{i=d+1}^m t_{id}{f}_i,g_1,\dots,g_r)$. 
\State Compute $B=\left\{ \cfrac{1}{x^{\alpha+1}} ~\middle|~ J\cfrac{1}{x^{\alpha+1}} =0 \right\}$, and $N\gets \langle B\rangle$. 
\State $L_1 \gets \emptyset$, 
$L_2 \gets \left\{ \cfrac{1}{x^{\alpha+1}} ~\middle|~ \cfrac{1}{x^{\alpha+1}} \in N:_E\m,~ \cfrac{1}{x^{\alpha+1}}\not\in N \right\}$. 
\While{$L_1\neq L_2$}
\State $\tau_0 \gets \min_{\prec}(L_2 \backslash L_1)$. 
\State $\Gamma \gets (\tau_0,\tau_1,\dots,\tau_p)$ where $\tau_1\succ \dots \succ \tau_p$ and 
\[
\{\tau_1,\dots,\tau_p\}=\left\{ \cfrac{1}{x^{\alpha+1}} ~\middle|~ 
\cfrac{1}{x^{\alpha+1}} \in T_2(N):_E\m,~ \cfrac{1}{x^{\alpha+1}} \not\in \LT_\prec(N),~ \cfrac{1}{x^{\alpha+1}} \prec \tau_0 \right\}. 
\]
\If{$M_{\Gamma,F}\cdot c=0$ for some $0\neq c=(c_0(\underline{t}), c_1(\underline{t}),\dots,c_p(\underline{t}))^T\in K[\underline{t}]^{p+1}$}
\State $B\gets B\cup\{\xi\}$, $N\gets \langle N, \xi\rangle$ where $\xi:=\Gamma \cdot c=\sum c_i\tau_i$. 
\State $\mathrm{PolyList} \gets \mathrm{PolyList} \cup~\{{c_0(\underline{t})}\}$. 
\State $L_2 \gets \left\{ \cfrac{1}{x^{\alpha+1}} ~\middle|~ \cfrac{1}{x^{\alpha+1}} \in \LT_\prec(N):_E\m,~ \cfrac{1}{x^{\alpha+1}}\not\in \LT_\prec(N) \right\}$. 
\Else ~$L_1 \gets L_1\cup \left\{{\tau_0}\right\}$, $ \mathrm{MatList}\gets  \mathrm{MatList} \cup~\{M_{\Gamma,F}\}$. 
\EndIf
\EndWhile 
\State \Return $|B|$, PolyList, MatList. 
\end{algorithmic}
\end{algorithm}
\begin{theorem}
\label{thm: main}
\begin{enumerate}[{(}1{)}]
\item Algorithm \ref{alg: main} computes the Hilbert--Samuel multiplicity $e_R(J)=\abs{B}$ of $J\subset R$. 
\item For $\underline{a}=(a_{ij})_{i,j}\in K^{d\times (m-d)}$ satisfying the following two conditions, 
$Q_{\underline{a}}$ is a reduction of $J$. 
\begin{enumerate}[{(}a{)}]
\item $c(\underline{a})\neq 0$ for all $c(\underline{t}) \in \mathrm{PolyList}$. 
\item For any $M(\underline{t})\in \mathrm{MatList}$, the linear equation $M(\underline{a})y=0$ has no non-trivial solution. 
Here, $M(\underline{a})$ is the matrix obtained by substituting $\underline{a}$ to $\underline{t}$ in $M(\underline{t})$. 
\end{enumerate}
\end{enumerate} 
\end{theorem}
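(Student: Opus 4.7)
My plan is to interpret Algorithm \ref{alg: main} as Algorithm \ref{alg: length} applied to the ideal $P_{\underline{t}} \subset S'$, where $S' := \K\llbracket x \rrbracket$ and $\K := K(\underline{t})$, augmented by the bookkeeping of the pivot coefficient $c_0(\underline{t})$ and the coefficient matrix $M_{\Gamma,F}$ at each loop iteration. Correctness of Algorithm \ref{alg: length}, the identification $R'/Q_{\underline{t}} = S'/P_{\underline{t}}$, and Theorem \ref{Matlis}(4) then give
\[
|B| \;=\; \dim_\K (R'/Q_{\underline{t}})^\vee \;=\; \ell_{R'}(R'/Q_{\underline{t}}).
\]

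To prove (1), I first observe that $R \to R'$ is a local flat homomorphism (scalar extension by the flat $K$-module $\K$ followed by $\m$-adic completion) with closed fibre $\K$, hence faithfully flat; consequently $R'$ is Cohen--Macaulay of dimension $d$, and every finite-length $R$-module $M$ satisfies $\ell_{R'}(M \otimes_R R') = \ell_R(M)$ by reading off composition factors. Applying this to $R/J^k$ yields $e_R(J) = e_{R'}(JR')$. Next, since $\K$ is infinite, Theorem \ref{red} supplies a nonempty Zariski open $U \subset \K^{d\times m}$ of $d$-element reductions of $JR'$; intersecting with the invertible-first-block locus and normalising produces a nonempty Zariski open $W \subset \K^{d \times (m-d)}$ such that $Q_{(I_d \mid \underline{a})}$ is a reduction of $JR'$ for every $\underline{a} \in W$. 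Because the entries $t_{ij}$ are algebraically independent over $K$, the $\K$-point $\underline{t}$ avoids the complement of $W$, so $Q_{\underline{t}}$ is a reduction of $JR'$ and is itself primary to the maximal ideal of $R'$. Corollary \ref{CM case} now yields
\[
\ell_{R'}(R'/Q_{\underline{t}}) \;=\; e_{R'}(Q_{\underline{t}}) \;=\; e_{R'}(JR') \;=\; e_R(J),
\]
so $|B| = e_R(J)$.

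For (2), the crux is that Algorithm \ref{alg: length} applied to the specialised generators $F_{\underline{a}}$ of $P_{\underline{a}} \subset S$ traces the same sequence of iterations as the generic run. I argue by induction on the loop index. At each step the current data $(L_1, L_2, N)$ depend only on $F$ and on the sequence of leading terms $\tau_0$ accepted so far, not on the specific pivots chosen --- this is the content of Theorem \ref{composition series}(3). It therefore suffices to check that the same accept/reject decision is made at every iteration: condition (a) ensures that each accepted pivot $\xi(\underline{t}) = \sum c_i(\underline{t}) \tau_i$ specialises to $\xi(\underline{a})$ with $\LT_\prec(\xi(\underline{a})) = \tau_0$ (since $c_0(\underline{a}) \neq 0$), and condition (b) ensures that each rejection survives because $M_{\Gamma,F}(\underline{a})$ still has trivial kernel. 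Hence the specialised run outputs the same cardinality $|B|$, so $\ell_R(R/Q_{\underline{a}}) = |B| = e_R(J)$; finiteness of this length forces $Q_{\underline{a}}$ to be $\n$-primary, and Corollary \ref{CM case} concludes that the $d$-generated ideal $Q_{\underline{a}}$ is a reduction of $J$.

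The principal obstacle is the specialisation argument underlying (2): Theorem \ref{composition series}(3) is essential to ensure that the matrices $M_{\Gamma,F}(\underline{t})$ stored in $\mathrm{MatList}$ are intrinsic to the composition series of $R'/Q_{\underline{t}}$ and not artefacts of a particular choice of pivots, so that specialisation at $\underline{a}$ transports the generic trace faithfully. A secondary subtlety in (1) is the passage from the fully generic reduction set $U$ to the restricted parametrisation $(I_d \mid \underline{t})$, which relies on a normalisation step together with the irreducibility of $\K^{d \times m}$.
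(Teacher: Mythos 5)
Your proposal is correct and follows the paper's overall strategy; part (2) in particular is essentially the paper's own specialization argument (track the run of Algorithm \ref{alg: length} on $P_{\underline{t}}$ under the substitution $\underline{t}\mapsto\underline{a}$, use (a) to preserve each accepted pivot and (b) to preserve each rejection, then conclude via Corollary \ref{CM case} from $\ell(R/Q_{\underline{a}})=e_R(J)$ and the fact that $Q_{\underline{a}}$ is $d$-generated). The one genuine difference is in part (1). The paper applies Theorem \ref{red} to $J\subset R$ over $K$ itself, produces a generic closed point $\underline{a}\in K^{d\times(m-d)}$ for which $Q_{\underline{a}}$ is a reduction, and then asserts $\ell(R/Q_{\underline{a}})=\ell_{R'}(R'/Q_{\underline{t}})$; you instead base-change along the faithfully flat map $R\to R'$ to get $e_R(J)=e_{R'}(JR')$ and argue that the generic point $\underline{t}$ itself lies in the reduction locus. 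Your route is cleaner for finite $K$ (the paper's invocation of Theorem \ref{red} over $K$ literally requires $K$ infinite, which sits awkwardly with the paper's claims elsewhere), and it avoids the unproved length-comparison between the closed point $\underline{a}$ and the generic point $\underline{t}$. However, your key sentence --- that $\underline{t}$ avoids the complement of $W$ \emph{because the $t_{ij}$ are algebraically independent over $K$} --- silently assumes that $W$ is defined over $K$, i.e.\ that its complement is $V_{\K}(\a)$ for some $\a\subset K[y_{ij}]$ rather than merely $\a\subset\K[y_{ij}]$; Theorem \ref{red} applied over $\K$ only gives the latter, and a $\K$-polynomial can certainly vanish at the point $(t_{ij})$. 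The gap is fillable (the open condition comes from the fiber cone $\bigoplus_k J^kR'/\n J^kR'\cong\bigl(\bigoplus_k J^k/\n J^k\bigr)\otimes_K\K$, so the defining equations can be taken with coefficients in $K$), but as written this step is not justified, and it is the only place where your argument needs repair.
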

\begin{proof}
{For} $\underline{b}=(b_{ij})_{i,j}  \in K^{d\times m}$, we set 
\[
Q'_{\underline{b}} := \left\langle \sum_{i=1}^m b_{ij}{f}_i ~\middle|~ 1\le i\le d\right\rangle_{R} \subset R. 
\]
By Theorem \ref{red}, there exists $\b \subset K[t_{ij}\mid 1\le i \le d, 1\le j\le m]$ 
such that $Q'_{\underline{b}}$ is a reduction of $J$ for $\underline{b}\in U_1:=K^{d\times m}\backslash V_K(\b)$. 
Let $\a\subset K[t_{ij} \mid 1\le i \le d, m-d \le j\le m] $ be the ideal obtained from $\b$ by substituting $t_{ii}=1, t_{ij}=0$ for $1\le i, j \le d, i\neq j$. 
Then $Q_{\underline{a}}$ is a reduction of $J$ for ${\underline{a}}\in U_2:=K^{d\times (m-d)} \backslash V_K(\a)$. 
We will show that $U_2 \neq \emptyset$. 
Since any two Zariski open sets of $K^{d\times m}$ has a non-empty intersection, 
there exists $\underline{b} \in U_1$ such that the $d\times d$ matrix $(b_{ij})_{1\le i,j\le d}$ has a non-zero determinant. 
If this is the case, $Q'_{\underline{b}}=Q_{\underline{a}}$ for some $\underline{a}\in K^{d\times (m-d)}$, 
and thus $U_2$ is not empty. 
By Corollary \ref{CM case}, for generic $\underline{a}\in  K^{d\times (m-d)}$, 
we have 
\[
e_R(J)=\ell(R/Q_{\underline{a}})=\ell_{R'}(R'/Q_{\underline{t}})=\ell_{S'}(S'/P_{\underline{t}}).
\]
Since Algorithm \ref{alg: main} computes $\ell(S'/P_{\underline{t}})=\abs{B}$ using Algorithm \ref{alg: length}, 
this proves (1). 

Assume that $\underline{a}$ satisfies (a) and (b) in (2). 
We apply Algorithm \ref{alg: length} to $P_{\underline{t}}$, and consider each step with substituting $\underline{a}$ to $\underline{t}$. 
When a new $\xi$ is added to $B$, the leading coefficient of $\xi$ does not vanish by the condition (a), 
and the condition that $P_{\underline{t}}$ annihilates $\xi$ is preserved under the substitution. 
When $M_{\Gamma,F}c=0$ does not have a non-trivial solution, 
this condition is also preserved under the substitution by the condition (b) and since the set $\{\tau_1,\dots,\tau_p\}$ does not change or becomes smaller.  
Therefore $\ell(R/Q_{\underline{a}})=\ell(S/P_{\underline{a}})=e_R(J)$. 
Since $R$ is Cohen--Macaulay, $Q_{\underline{a}}$ is a reduction of $J$, which proves (2). 
\end{proof}
We note that the set of $\underline{a} \in \mathrm{PolyList}$ satisfying (a) and (b) forms a Zariski open set. 
Thus conditions (a) and (b)  in Theorem \ref{thm: main} together express what generic linear combinations means explicitly. 
If we choose coefficients using random numbers, the coefficients tend to be large. 
We are able to look for a reduction with small coefficients using Theorem \ref{thm: main}. 
Once we obtain a reduction $Q$ of $J$, we may use $Q$ for solving membership problem of $\overline{J}$; 
for $f\in S$, $f\in \overline{J}$ if and only if $e(\langle Q, f\rangle)=\ell(R/Q)$. 

Algorithm \ref{main alg} also give a theoretical result that Hilbert--Samuel multiplicities are computable 
even if ideals are generated by power series with infinitely many terms. 
Recall that a power series $\sum_{\alpha\in \Z_{\ge 0}} c_\alpha x^\alpha$ is computable 
if the function $\alpha \mapsto c_\alpha$ is computable. 
\begin{corollary}\label{infinite terms}
Let $I \subset S:=K\llbracket x_1,\dots,x_n \rrbracket$ be an ideal and $R=S/I$. Let $\m$ be the maximal ideal of $R$ and $J\subset R$. 
Suppose that $R$ is Cohen--Macaulay and $J$ is $\m$-primary ideal. 
If $I$ and $J$ are generated by computable power series, then $e_R(J)$ is computable using Algorithm \ref{main alg}. 
\end{corollary}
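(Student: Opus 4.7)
The plan is to observe that Algorithm \ref{main alg} is already formulated so that every intermediate quantity it manipulates is a \emph{finite} datum — a finite set of terms of $E$, an element of $E$ supported on finitely many terms, or a matrix over $K(\underline{t})$ — and that each primitive operation reduces to finitely many coefficient queries to the generating series. Under the computability hypothesis on $I$ and $J$, this is exactly what is required, and no additional convergence argument is needed.

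The key observation is the following elementary formula. For $f = \sum_{\beta} c_\beta x^\beta \in S$ and a term $\tau = \cfrac{1}{x^{\alpha+1}} \in E$, the $S$-action on $E$ gives
\[
f \cdot \frac{1}{x^{\alpha+1}} \;=\; \sum_{0 \le \beta \le \alpha} c_\beta \cdot \frac{1}{x^{\alpha-\beta+1}},
\]
a finite sum indexed by the box $[0,\alpha_1]\times\cdots\times[0,\alpha_n]$. Thus the product $f\cdot\tau$ depends only on finitely many coefficients of $f$, and when $f$ is computable these coefficients are producible on demand.

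I would then walk through Algorithm \ref{alg: main} line by line. The lists $B$, $L_1$, $L_2$, and the sequence $\Gamma$ are finite at every stage; each entry of the matrix $M_{\Gamma,F}$ is read off from a product $F_i \cdot \tau_k$, which by the formula above uses only finitely many coefficients of $f_i$ and $g_j$ (together with the indeterminates $\underline{t}$). Solving $M_{\Gamma,F}c=0$ over $K(\underline{t})$ and evaluating the colon-module conditions via Lemma \ref{monomial case} are standard symbolic computations. Termination of the while loop transfers verbatim from the correctness proof of Algorithm \ref{alg: length}, since that argument relied only on the ascending chain condition for ideals of $S$ and submodules of $(S/P_{\underline{t}})^\vee$, not on any polynomial structure of the generators.

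The only real issue — essentially a bookkeeping matter — is to guarantee that across the entire run only finitely many coefficients of each generator are queried. This follows at once from combining (i) the finite number of iterations of the while loop and (ii) the finiteness of queries per iteration. Once this is in place, the output $|B|$ is equal to $e_R(J)$ by Theorem \ref{thm: main}(1), and the corollary is proved.
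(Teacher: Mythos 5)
Your proposal is correct and follows essentially the same route as the paper: the paper's proof is precisely your key observation that $f\cdot\frac{1}{x^{\alpha+1}}$ involves only the finitely many coefficients $c_\beta$ with $\beta\le\alpha$, so each step of Algorithm \ref{alg: main} runs in finite time, with termination inherited from the correctness proof of Algorithm \ref{alg: length}. Your additional line-by-line bookkeeping is a harmless elaboration of the same one-line argument.
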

\begin{proof}
If $\eta$ is a computable power series, $\eta \cfrac{1}{x^{\alpha+1}}$ can be computed in finite time since 
there exist only finitely many terms $x^\beta$ of $S$ such that $x^\beta \cfrac{1}{x^{\alpha+1}}\neq 0$. 
Therefore, we are able to execute each step of Algorithm \ref{main alg} in finite time in this case. 
\end{proof}
\begin{remark}\label{rmk:infinite terms}
If $I$ and $J$ are generated by power series with infinitely many terms, existing methods for computing $e_R(J)$ using Buchberger's type algorithms 
(e.g. Gr\"obner basis, Mora's algorithm \cite{Mora}, Lazard's homogenization method \cite{Lazard}) are not applicable.  
{Because the computability of Buchberger's type algorithms in formal power series rings are essentially based on the fact that if an ideal of 
$S=K\llbracket x_1,\dots,x_n \rrbracket$ is generated by polynomials $f_1,\dots,f_r \in P=K[x_1,\dots,x_n]$ 
then the syzygy module $\Syz_S(f_1,\dots,f_r)$ of $f_1,\dots,f_r$ over $S$ is generated by the syzygy module 
$\Syz_P(f_1,\dots,f_r)$ of $f_1,\dots,f_r$ over the polynomial ring $P$ which is computable. 
}

We note that it is known that whether $J$ is $\m$-primary or not is not decidable. 
Let $P(y_1,\dots,y_r)\in \Z[y_1,\dots,y_r]$ be a polynomial, and $p_1=2,{p_2}=3,\dots,p_r$ the first $r$ prime numbers. 
For $a \in \Z_{\ge 0}$, we set 
\[
c_a=
\left\{
\begin{array}{cc}
1 & \mbox{ if } a=p_1^{m_1}\cdots p_r^{m_r} \mbox{ and } P(m_1,\dots,m_r)=0 \\
0 & \mbox{ otherwise}
\end{array}
\right.
\]
and let $f(x_1)=\sum_a c_a x_1^a$. Then $f$ is computable, and $f\neq 0$ if and only if the equation $P(y_1,\dots,y_r)=0$ has a non-negative integer solution. 
It is known that there is no algorithm for deciding this. 
Therefore, there is no algorithm for deciding whether an ideal $\langle f(x_1) \rangle\subset K\llbracket x_1\rrbracket$ 
is $\m$-primary or not. 

{
Cohen--Macaulay of $R=S/I$ is also not decidable. Let $f(x_1)$ be a computable power series. 
The residue ring $K\llbracket x_1,x_2,x_3 \rrbracket/\langle x_1^2f(x_1), x_1x_2f(x_1) \rangle$ is Cohen--Macaulay if and only if $f(x_1)=0$ which is not decidable in general. 
So we consider only the case where we know that $R$ is Cohen--Macaulay. 
For example, complete intersection rings such as $S/\langle f \rangle$ with $f\neq 0$, and the completion of Cohen--Macaulay rings of essentially finite type over $K$. }
\end{remark}
\subsection{Modulo $p$ method}
The main part of the algorithms presented in this paper is solving linear equations. 
By using so-called modular method, one can reduce computational time of this part. 
For simplicity, assume that $K=\Q$ and the coefficients of $f_i$'s and $g_j$'s are in $\Z$. 
Then the entries of matrices $M_{\Gamma,F}$ appearing in Algorithm \ref{alg: main} is in the polynomial ring $\Z[t_{ij} \mid 1\le i\le d,d+1\le j\le m]$. 
Applying Gaussian elimination to such matrices is a hard task in general. 
By Theorem \ref{composition series} (4), the dimension of the solution space of $M_{\Gamma,F}x=0$ is at most $1$. 
Here, we give a effective method for solving linear equations with matrices of this type. 

Let $R=\Z[t]=\Z[t_1,\dots,t_r]$, and $K=\Q(t_1,\dots,t_r)$ the fractional field of $R$. 
Let $M\in R^{n \times m}$ be an $n\times m$ matrix. 
Let $p$ be a prime number and $0\le a_1,\dots,a_n \le p-1$. 
Then $\p=\langle p, t_1-a_1,\dots, t_n-a_n\rangle$ is a maximal ideal of $R$, and $R/\p=\F_p$. 
We denote $M \mod \p$ by $\overline{M}\in \F_p^{n\times m}$. 
Let $x=(x_1,\dots,x_m)^T$ be a vector of indeterminates. 
For a vector $c=(c_1,\dots,c_m)^T$, we call $\supp(c):=\{i\mid c_i\neq 0\}$ the support of $c$. 
For a subset $\Lambda \subset \{1,\dots,n\}$, $[M]_\Lambda$ denotes the submatrix of $M$ 
consisting of the $i$-th columns of $M$ for $i\in \Lambda$. 
\begin{proposition}\label{mod p}
Let the notation be as above. 
Assume that the dimension of the solution space in $K^m$ of the linear equations $Mx=0$ is at most $1$. 
\begin{enumerate}[{(}1{)}]
\item If the linear equation $\overline{M}x=0$ does not have non-trivial in $\F_p^m$, then $Mx=0$ does not have non-trivial in $K^m$. 
\item Assume that $\overline{M}c=0$ for some $c\in \F_p^m$. If $\p$ is generic, that is, $p$ is sufficiently large and $a_1,\dots,a_r$ are generically chosen, 
then the linear equation $[M]_{\supp(c)} x'=0$ has a solution in $K^{\abs{\supp(c)}}$ where $x'=(x_i \mid i\in \supp(c))$. 
\end{enumerate}
\end{proposition}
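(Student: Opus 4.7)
The plan is to analyze how the rank of $M$, hence $\dim \ker M$, behaves under reduction modulo $\p$. Part (1) is the standard upper semicontinuity of rank: if $\overline{M}x=0$ has only the trivial solution in $\F_p^m$, then $\overline{M}$ has rank $m$ over $\F_p$, so some $m\times m$ minor of $\overline{M}$ is nonzero. Lifting, the corresponding $m\times m$ minor of $M$ is not in $\p$, so it is a nonzero element of $R$ and hence of $K$. Therefore $M$ also has rank $m$ over $K$, and $Mx=0$ has only the trivial solution in $K^m$.

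For (2), I would exploit the hypothesis $\dim_K \ker M \le 1$. If $\dim_K \ker M = 0$, then some $m\times m$ minor of $M$ is a nonzero element of $R$; for generic $\p$ this minor survives reduction, forcing $\ker \overline{M}=0$, contrary to the existence of a nonzero $c$. So we may assume $\ker M$ is one-dimensional over $K$, spanned by some $v$. Clearing a common denominator puts $v$ in $R^m \setminus \{0\}$ without changing its support, and we set $\Lambda_0 := \supp(v)$.

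The next step is to transfer this to $\F_p$. Since $\mathrm{rank}_K M = m-1$, some $(m-1)\times(m-1)$ minor of $M$ is a nonzero element of $R$; for generic $\p$ this minor remains nonzero, giving $\mathrm{rank}_{\F_p} \overline{M} \ge m-1$, and combining with the rank semicontinuity used in (1) forces equality. Hence $\dim_{\F_p} \ker \overline{M} = 1$. For generic $\p$ each of the finitely many nonzero entries of $v$ also stays nonzero modulo $\p$, so $\overline{v} \ne 0$ and $\supp(\overline{v}) = \Lambda_0$. Therefore $\overline{v}$ spans $\ker \overline{M}$, and $c = \lambda \overline{v}$ for some $\lambda \in \F_p^\times$. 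In particular $\supp(c) = \Lambda_0$, and $(v_i)_{i \in \Lambda_0}$ is a nontrivial solution of $[M]_{\Lambda_0} x' = 0$ in $K^{|\Lambda_0|}$.

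The main technical point to handle carefully is making the genericity explicit: what the argument requires of $\p$ is only the nonvanishing modulo $\p$ of finitely many fixed nonzero elements of $R = \Z[t_1,\ldots,t_r]$, namely a chosen nonzero $(m-1)\times(m-1)$ minor of $M$ (and, in the first case, an $m\times m$ minor), together with the nonzero entries of the kernel generator $v$. Avoiding their common vanishing locus is achieved by taking $p$ sufficiently large and $(a_1,\ldots,a_r)$ outside a finite union of hypersurfaces in $\F_p^r$, which is the intended meaning of ``$\p$ generic.'' With this genericity fixed, the argument above goes through.
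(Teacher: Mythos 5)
Your proof is correct and follows essentially the same route as the paper: rank semicontinuity via the nonvanishing of $m$- and $(m-1)$-minors, together with genericity of $\p$ used only to preserve the nonvanishing of finitely many fixed elements of $R$ (the relevant minors and the entries of a kernel generator cleared into $R^m$). The only cosmetic differences are that you argue (1) directly rather than by contraposition and that you spell out explicitly why the given $c$ must be proportional to the reduction of the kernel generator, a point the paper leaves implicit.
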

\begin{proof}
We will prove the contraposition of (1). 
Assume that $Mx=0$ has a non-trivial, which is equivalent to that any $m$-minor of $M$ is zero. 
Then any $m$-minor of $\overline{M}$ is also zero, and thus  $\overline{M}x=0$ has a non-trivial solution. 

Let $I_i(M)\subset R$ be the ideal generated by all $i$-minors of $M$ for $i=m-1,m$. 
Since the dimension of the solution space in $K^m$ of the linear equations $Mx=0$ is at most $1$, we have $I_{m-1}(M)\neq 0$. 
If $\p$ is generic, $I_{m-1}(\overline{M})\neq 0$, 
and thus the dimension of the solution space in $\F_p^m$ of the linear equations $\overline{M}x=0$ is also at most $1$. 
Since $\overline{M}x=0$ has a non-trivial solution, $I_{m}(\overline{M})=0$. 
If $\p$ is generic, this implies $I_m(M)=0$. 
Then there exists $0\neq c(t)=(c_1(t),\dots,c_m(t))^T\in \Z[t]^m$ which is a $K$-basis of the solution space of $Mx=0$. 
Let $c= c(t) \mod \p$. As $\p$ is generic, $\supp(c(t))=\supp(c)$, and $c$ is a $\F_p$-basis of the solution space of $\overline{M}x=0$. 
This proves the assertion of (2). 
\end{proof}
If  $[M]_{\supp(c)} x'=0$ in (2) has no non-trivial solution, we should take another $\p$ or solve $Mx=0$ directly, 
but it rarely happens. 
Using Proposition \ref{mod p}, we can reduce the computation time of Algorithm \ref{alg: main}. 
\section{{Examples}}
We implement Algorithm \ref{main alg} in a computer algebra system Risa/Asir. We give some examples of computation. 

We fix a term order $\prec$ on $E$ as follows: 
Let $\prec_{\mathrm{glex}}$ be the graded lexicographic order with $z\prec_{\mathrm{glex}} y\prec_{\mathrm{glex}} x$, and we define $\prec$ by 
\[
\frac{1}{x^{i_1+1}y^{i_2+1}z^{i_3+1}} \prec \frac{1}{x^{j_1+1}y^{j_2+1}z^{j_3+1}} \mbox{ if } 
x^{i_1}y^{i_2}z^{i_3} \prec_{\mathrm{glex}} x^{j_1}y^{j_2}z^{j_3}. 
\]
\begin{example}
Let $J=\langle x^3,y^2,xy \rangle \subset R=\C\llbracket x,y\rrbracket$. 
Then $e_R(J)=\ell(R/\langle x^3+axy+y^2+bxy\rangle)$ for generic $a,b \in \C$. 
Algorithm \ref{main alg} computes $M:=(R/\langle x^3+axy+y^2+bxy\rangle)^\vee$ for generic $a,b \in \C$. 
We will execute Algorithm \ref{main alg} by hand to illustrate how this algorithm works. 

At line 1, we compute $N=T_1(M)=(R/\langle x^3, y^2, xy\rangle)^\vee \subset E_R$, and $B=\left\{\frac{1}{xy},\frac{1}{x^2y},\frac{1}{x^3y},\frac{1}{xy^2} \right\}$. 
Since $N$ is generated by monomials, $N=T_1(N)=T_2(N)$. 
At line 2, $L_1=\emptyset$, and by Lemma {monomial case} (3), $L_2=\left\{\frac{1}{xy^3},\frac{1}{x^2y^2},\frac{1}{x^4y}\right\}$. 

In the while loop beginning form line 3, we construct $\xi_i$'s in Theorem \ref{composition series}. 

The first candidate of the leading term of $\xi_1$ is $\tau_0:=\min_\prec{L_2}=\frac{1}{xy^3}$. 
The set $\{\tau_1,\dots,\tau_p\}$ in line 7 is the empty set in this case. 
Since $J\frac{1}{xy^3}\neq 0$, we update $L_2$ with $\left\{\frac{1}{xy^3}\right\}$. 
As $L_1\neq L_2$, we go to line 6. 

The next candidate of the leading term of $\xi_1$ is $\tau_0:=\min_\prec{L_2 \backslash L_1}=\frac{1}{x^2y^2}$. 
The set $\{\tau_1,\dots,\tau_p\}$ in line 7 is $\{\frac{1}{xy^3}\}$ in this case. 
Since there is no non-trivial $c_0,c_1\in \C$ satisfying $J\left(c_0\frac{1}{x^2y^2}+c_1\frac{1}{xy^3}\right)=0$, we update $L_2$ with $\left\{\frac{1}{xy^3}, \frac{1}{x^2y^2}\right\}$. 
As $L_1\neq L_2$, we go to line 6. 

The next candidate of the leading term of $\xi_1$ is $\tau_0:=\min_\prec{L_2 \backslash L_1}=\frac{1}{x^4y}$. 
The set $\{\tau_1,\dots,\tau_p\}$ in line 7 is $\{\frac{1}{xy^3}, \frac{1}{x^2y^2}\}$ in this case. 
We will look for $c_0,c_1,c_2\in \C$ satisfying $J\left(c_0\frac{1}{x^4y}+c_1\frac{1}{x^2y^2}+c_2\frac{1}{xy^3}\right)=0$, 
and find a non-trivial solution $(c_0,c_1,c_2)=(a,b,-1)$. Thus 
\[
\xi_1 = \cfrac{a}{x^4y}+\cfrac{b}{xy^3}+\cfrac{-1}{x^2y^2}. 
\]
We update $B$ and $N$ with $\left\{\frac{1}{xy},\frac{1}{x^2y},\frac{1}{x^3y},\frac{1}{xy^2},\xi_1 \right\}$ and $N+\langle \xi_1\rangle$ respectively. 
Then 
\[
\LT_\prec(N)=(R/\langle x^3, y^2, xy\rangle)^\vee+\langle \LT_\prec(\xi_1)\rangle=(R/\langle x^4, y^2, xy\rangle)^\vee, 
\]
and thus we update $L_2$ to $\left\{\frac{1}{xy^3},\frac{1}{x^2y^2},\frac{1}{x^5y}\right\}$. 
We note that $T_2(N)=(R/\langle x^3, y^2, xy\rangle)^\vee+\left\langle \cfrac{1}{x^4y}, \cfrac{1}{xy^3},\cfrac{1}{x^2y^2}\right\rangle$. 
Then, as $L_1\neq L_2$, we go to line 6. 

The first candidate of the leading term of $\xi_2$ is $\tau_0:=\min_\prec{L_2 \backslash L_1}=\frac{1}{x^5y^2}$. 
The set $\{\tau_1,\dots,\tau_p\}$ in line 7 is $\{\frac{1}{xy^3}, \frac{1}{x^2y^2},\frac{1}{xy^4}, \frac{1}{x^2y^3},\frac{1}{x^3y^2}\}$ in this case. 
Since there is no non-trivial $c_1,c_2,c_3,c_4,c_5\in \C$ satisfying 
\[
J\left(c_0\frac{1}{x^5y}+c_1\frac{1}{xy^3}+c_2\frac{1}{x^2y^2}+c_3\frac{1}{xy^4}+c_4\frac{1}{x^2y^3}+c_5\frac{1}{x^3y^2}\right)=0, 
\]
we update $L_2$ with $\left\{\frac{1}{xy^3}, \frac{1}{x^2y^2}, \frac{1}{x^4y^2}\right\}$. 
Then we have $L_1=L_2$ and thus we leave the while loop, 
and conclude that $e_R(J)=\abs{B}=\left|\left\{\frac{1}{xy},\frac{1}{x^2y},\frac{1}{x^3y},\frac{1}{xy^2},\xi_1 \right\}\right|=5$. 
\end{example}
\begin{example}
Let $J=\langle f_1,f_2,f_3,f_4 \rangle\subset R=\C\llbracket x,y,z\rrbracket$ where 
\[
f_1= x^2+y^3+z^3,~f_2=y^3+xz^3,~f_3=z^4+xy^3,~f_4=x^2+xyz+y^4
\] 
Then $e_R(J)=\ell(R/\langle f_1+af_4,f_2+bf_4,f_3+cf_4 \rangle)$ for generic $a,b,c\in\C$. 

Algorithm \ref{main alg} computes $N:=(R/\langle  f_1+af_4,f_2+bf_4,f_3+cf_4 \rangle)^\vee \subset E_R$ for generic $a,b,c\in \C$ as follows: 
\[
T_1(N)=(R/\langle x^2,y^3,z^3,xyz \rangle)^\vee
\]
has length $14$, and $N$ is spanned over $\C$ by $T_1(N)$ and the following four elements 
\begin{align*}
& \cfrac{1}{z^2y^2x^2}+\cfrac{-1}{zyx^3}+\cfrac{1}{z^4yx},~~
\cfrac{c}{z^3y^2x^2}+\cfrac{-1}{zyx^3}+\cfrac{a-b+1}{z^4yx}+\cfrac{b}{zy^4x}+\cfrac{-c}{z^2yx^3}+\cfrac{c}{z^5yx}\\
& \cfrac{1}{z^2y^3x^2}+\cfrac{-1}{zy^2x^3}+\cfrac{1}{z^4y^2x},\\
&\cfrac{c}{z^3y^3x^2}+\cfrac{-b}{zyx^3}+\cfrac{b-c}{z^4yx}+\cfrac{c}{zy^4x}+\cfrac{-1}{zy^2x^3}
+\cfrac{c}{zyx^4}+\cfrac{a-b+1}{z^4y^2x}+\cfrac{b}{zy^5x}+\cfrac{-c}{z^4yx^2}+\cfrac{-c}{z^2y^2x^3}+\cfrac{c}{z^5y^2x}. 
\end{align*}
Thus we have $e_R(J)=\ell(N)=14+4=18$. 
\end{example}
\begin{example}
{
Let $S=\C\llbracket x,y,z\rrbracket$, $R=S/\langle x^2+y^3+z^4\rangle$ and $J_1=\langle x^2,xy,z^2\rangle_R\subset R$. 
Since $R$ is a complete intersection, $R$ is a Cohen--Macaulay local ring of dimension 2. 
Then 
\[
e_R(J_1)=\ell_R(R/\langle x^2+a z^2,xy+bz^2\rangle)=\ell_S(S/\langle x^2+a z^2,xy+bz^2, x^2+y^3+z^4\rangle) 
\]
for generic $a,b\in \C$. 

Algorithm \ref{main alg} computes $N:=(S/\langle x^2+a z^2,xy+bz^2, x^2+y^3+z^4\rangle)^\vee \subset E_S$ as follows: 
\[
T_1(N)=(S/\langle z^2,xy,x^2,y^3\rangle)^\vee
\]
has length $8$, and  
$N$ is spanned over $\C$ by $T_1(N)$ and the following two elements 
\[
\cfrac{a}{zy^4x}+\cfrac{1}{z^3yx}+\cfrac{-b}{zy^2x^2}+\cfrac{-a}{zyx^3},~~~
\cfrac{a}{z^2y^4x}+\cfrac{1}{z^4yx}+\cfrac{-b}{z^2y^2x^2}+\cfrac{-a}{z^2yx^3}. 
\]
Since $\ell(T_1(N))=8$, we have $e_R(J_1)=\ell(N)=8+2=10$. 

Let $J_2=\langle J_1, xz\rangle_R=\langle x^2,xy,z^2,xz\rangle_R\subset R$. 
Then 
\[
e_R(J_1)=\ell_R(R/\langle x^2+a_1z^2+a_2xz,xy+b_1z^2+b2xz\rangle)=\ell_S(S/\langle x^2+a_1z^2+a_2xz,xy+b_1z^2+b_2xz,x^2+y^3+z^4\rangle) 
\]
for generic $a,b\in \C$. 

Algorithm \ref{main alg} computes $N:=(S/\langle x^2+a_1z^2+a_2xz,xy+b_1z^2+b_2xz,x^2+y^3+z^4\rangle)^\vee \subset E_S$ as follows: 
\[
T_1(N)=(S/\langle z^2,xy,x^2,y^3,xz\rangle)^\vee
\]
has length $7$, and $N$ is spanned over $\C$ by $T_1(N)$ and the following three elements 
\begin{align*}
&\cfrac{b_2a_1-b_1a_2}{zy^2x^2}+\cfrac{a_2}{z^3yx}+\cfrac{-a_1}{z^2yx^2},~~~\cfrac{b_2a_1-b_1a_2}{zy^4x}+\cfrac{b_2}{z^3yx}+\cfrac{-b_1}{z^2yx^2}+\cfrac{-b_2a_1+b_1a_2}{zyx^3},\\
&\cfrac{b_2a_1^3-b_1a_2a_1^2}{zy^5x}+\cfrac{-a_1^2+a_2^2a_1}{z^4yx}+\cfrac{b_2a_1^2+(-b_2a_2^2-2b_1a_2)a_1+b_1a_2^3}{z^3y^2x}+\cfrac{-a_2a_1^2}{z^3yx^2}
+\cfrac{(b_2a_2+b_1)a_1^2-b_1a_2^2a_1}{z^2y^2x^2}\\
&+\cfrac{(-b_2^2a_2-2b_2b_1)a_1^2+(2b_2b_1a_2^2+2b_1^2a_2)a_1-b_1^2a_2^3}{zy^3x^2}+\cfrac{a_1^3}{z^2yx^3}+\cfrac{-b_2a_1^3+b_1a_2a_1^2}{zy^2x^3}+\cfrac{-a_1^3}{z^2y^4x}. 
\end{align*}
Thus $e_R(\langle J_1, xz\rangle)=7+3=10=e_R(J_1)$, and $xz\in \overline{J_1}$. 
}
\end{example}
Algorithm \ref{main alg} is applicable when $I$ and $J$ are generated by computable power series (Corollary \ref{infinite terms}). 
Our algorithm also has some advantages in the case where ideals are generated by polynomials that have terms of higher degree. 

Suppose that $J\subset R=K\llbracket x_1,\dots,x_n \rrbracket$ is $\m$-primary and generated by polynomials $f_1,\dots,f_r \in P=K[x_1,\dots,x_n]$. 
Let $J'=\langle f_1,\dots,f_r \rangle \subset K[x_1,\dots,x_n]$, and $\{\p_1,\dots, \p_m\}$ the set of associated prime ideals of $J'$ where $\p_1=\langle x_1,\dots,x_n\rangle$. 
Let $J'=\q_1\cap\dots\cap\q_m$ be the primary decomposition of $J'$ with $\sqrt{\q_i}=\p_i$. 
If $m=1$, that is, if $J'$ is a primary ideal, $e_R(J)$ can be computed using the algorithm in \cite{MR}. 
In Macaulay2 \cite{M2}, a command ``multiplicity" is implemented for computing the summation of the Hilbert--Samuel multiplicities 
$\sum_{i=1}^m e_{P_i}(\q_i P_i)$ where $P_i=P_{\p_i}$ is the localization of $P$ at $\p_i$. 
If $m\neq 2$, to apply the algorithm in \cite{MR}, we should compute the primary component $\q_1$ which is not easy when the rest part $\q_2\cap\dots\cap\q_m$ is complicated. 
Even if $m=1$ we should check that $J'$ is actually a primary ideal. 

On the other hand, Algorithm \ref{main alg} does not need to compute $\q_1$ when $J'$ is not primary. 
\begin{example}\label{ex}
{
Let $R=\C\llbracket x,y,z\rrbracket$ and $J=\langle x^2,xyz,y^3,z^4\rangle$. 
Then 
\[
e_R(J)=\ell(R/\langle x^2-az^4,xyz-bz^4,y^3-cz^4\rangle)
\]
for generic $a,b,c\in\C$. By executing Algorithm \ref{main alg}, we see that, for generic $a,b,c\in\C$, the Matlis dual 
$(R/\langle x^2-az^4,xyz-bz^4,y^3-cz^4\rangle)^\vee$ is generated over $R$ by 
\[
\xi=\cfrac{b^2}{z^7y^2x}+\cfrac{-ca^2}{zy^3x^3}+\cfrac{-ba^2}{z^2yx^4}+\cfrac{-c^2a}{zy^6x}
+\cfrac{-cba}{z^2y^4x^2}+\cfrac{-b^2a}{z^3y^2x^3}+\cfrac{ca}{z^5y^3x}+\cfrac{-cb^2}{z^3y^5x}+\cfrac{ba}{z^6yx^2}+\cfrac{-b^3}{z^4y^3x^2}
\]
and $e_R(J)=24$. The computation finished within 1 second on a laptop with Intel Core i5-3320M at 2.6 GHz, with 8.0 GB memory running Windows 10. 

We will consider ideals with high degree terms in their system of generators. 
Let 
\[
F=\{f_1:=x^2+z^{10}+y^{20}+x^{200},f_2:=xyz+x^{10}+xy^{20}+z^{100},f_3:=y^3+x^{10}+y^{100},f_4:=z^4+y^{10}+x^{20}+z^{100}\}.
\]
A primary decomposition of the polynomial ideal generated by $F$ is hard to compute. 
On the other hand, by executing Algorithm \ref{main alg}, we see that 
$N:=(R/\langle f_1-af_4,f_2-bf_4,f_3-cf_4\rangle)^\vee$ coincides with $(R/\langle x^2-az^4,xyz-bz^4,y^3-cz^4\rangle)^\vee$, 
and $e_R(\langle F\rangle)=24$. 
This computation also finished within 1 second. 
The elements of $E_R$ appearing in the computation do not change after replacing the input form $\{x^2,xyz,y^3,z^4\}$ to $F$, 
and they are annihilated by the higher terms in $F$. 
As a consequence, the added higher degree terms does not affect on the computation time. 
}
\end{example}
\section*{Acknowledgement}
This work was supported by JSPS KAKENHI Grant Number JP18K03320.
\bibliographystyle{elsarticle-harv}

\begin{thebibliography}{30}
\bibitem[{Bruns--Herzog (1993)}]{BH}
W. Bruns, J. Herzog, {\it Cohen--Macaulay Rings}, Cambridge Studies 
in Advanced Mathematics, {\bf 39}, Cambridge, Cambridge University Press, 1993.
\bibitem[{Grayson--Stillman (1992)}]{M2}
D. R. Grayson, M. E. Stillman, Macaulay2, a software system for research in algebraic geometry. Available at http://www.math.uiuc.edu/Macaulay2/
\bibitem[{Huneke--Swanson (2006)}]{HS}
C. Huneke, I. Swanson, {\it Integral closure of ideals, rings, and modules}, London Mathematical
Society Lecture Note Series, vol. {\bf 336}, Cambridge University Press, Cambridge, 2006.
\bibitem[{Lazard (1983)}]{Lazard}
D. Lazard, 
Gr\"obner bases, Gaussian elimination, and resolution of systems of algebraic equations, 
Lecture Notes in Computer Science {\bf 162} (1983), 146--156.
\bibitem[{Matsumura (1986)}]{Matsumura}
H. Matsumura, {\it Commutative ring theory}, Cambridge University Press, Cambridge, 1986.
\bibitem[{Mora (1982)}]{Mora}
T. Mora, 
An algorithm to compute the equations of tangent cones
Lecture Notes in Computer Science {\bf 144} (1982), 24--31.  
\bibitem[{Mora--Rossi (1995)}]{MR}
T. Mora, M.E. Rossi, 
An algorithm for the Hilbert-Samuel function of a primary ideal, 
Comm. Algebra {\bf 23} no. 5 (1995), 1899--1911. 
\bibitem[{Greuel--Pfister (2007)}]{Singular}
G.-M. Greuel and G. Pfister. A Singular Introduction to Commutative Algebra. Springer Publishing
Company, Incorporated, 2nd edition, 2007.
\bibitem[{Rees (1961)}]{Rees}
D. Rees, $\mathcal{A}$-transforms of local rings and a theorem on multiplicities of ideals, Proc. Cambridge Phil. Soc. {\bf 57} (1961), 8--17.
\bibitem[{Tajima--Nakamura--Nabeshima (2009)}]{TNN}
S. Tajima, Y. Nakamura and K. Nabeshima, Standard bases and algebraic local cohomology for zero dimensional ideals, 
Advanced Studies in Pure Mathematics {\bf 56} (2009), 341--361. 
\bibitem[{Teissier (1983)}]{Te}
B. Teissier, Vari\'eti\'es polaries. II. Multipliciti\'es polaries, sections planes, et 
conditions de Whitney, Algebraic Geometry (La R\'abida 1981), Lect. Notes. Math. 961 (1983), 314--491.
\end{thebibliography}

\end{document}